\numberwithin{equation}{section}
\newtheorem{theorem}{Theorem}[section]
\newtheorem{proposition}[theorem]{Proposition}
\newtheorem{corollary}[theorem]{Corollary}
\theoremstyle{definition}
\newtheorem{definition}[theorem]{Definition}
\newtheorem{remark}[theorem]{Remark}
\newtheorem{problem}[theorem]{Problem}
\begin{document}

\baselineskip=17pt

\title[Mixing and observation for Markov operator cocycles]{Mixing and observation for Markov operator cocycles}

\author[F. Nakamura]{Fumihiko NAKAMURA}
\address[F. Nakamura]{Faculty of Engineering, Kitami Institute of Technology, Hokkaido, 090-8507, JAPAN}
\email[F. Nakamura]{nfumihiko@mail.kitami-it.ac.jp}

\author[Y. Nakano]{Yushi NAKANO}
\address[Y. Nakano]{Department of Mathematics, Tokai University, Kanagawa 259-1292, JAPAN}
\email[Y. Nakano]{yushi.nakano@tsc.u-tokai.ac.jp}

\author[H. Toyokawa]{Hisayoshi TOYOKAWA}
\address[H. Toyokawa]{Institute of Mathematics for Industry, Kyushu University, Fukuoka, 819-0395, JAPAN}
\email[H. Toyokawa]{toyokawa@imi.kyushu-u.ac.jp}

\begin{abstract}
We consider generalized definitions of mixing and exactness for random dynamical systems in terms of Markov operator cocycles. We first give six fundamental  definitions of mixing for Markov operator cocycles in view of  observations of the randomness in environments, and reduce them into two  different groups. Secondly, we give the definition of exactness for Markov operator cocycles and show that Lin's criterion for exactness can be naturally extended to the case of Markov operator cocycles. Finally, in the class of asymptotically periodic  Markov operator cocycles, we show the Lasota-Mackey type equivalence between mixing, exactness and asymptotic stability. 
\end{abstract}

\subjclass[2010]{37H05  ; 37A25 }
\keywords{Markov operator cocycles, mixing, exactness, random dynamical systems}

\maketitle

\section{Introduction}

  This paper concerns a cocycle generated by Markov operators, called a {\it Markov operator cocycle}. 
  Let $(X,\mathcal{A},m)$ be a probability space, and  $L^1(X, m)$  (the quotient   by equality $m$-almost everywhere of) the space of all $m$-integrable functions on $X$, endowed with  the usual $L^1$-norm  $\Vert \cdot \Vert _{L^1(X)}$.
An operator $P: L^1(X, m)\to L^1 (X, m)$ is called a \emph{Markov operator} if $P$ is linear, positive (i.e.~$Pf\geq 0$ $m$-almost everywhere if $f\geq 0$ $m$-almost everywhere) and
 \begin{equation}\label{eq:0219}
\int _X Pf  dm  = \int _Xf dm \quad
\text{ for all $f \in L^1(X, m)$} .
\end{equation}
  Markov operators  naturally appear in the study of dynamical systems (as Perron-Frobenius operators; see \eqref{eq:0914b}), Markov processes (as integral operators with the stochastic kernels of the processes), and random dynamical systems in the annealed regime (as integrations of Perron-Frobenius operators over environmental parameters).
 For these  deterministic/stochastic dynamics, 
  $\{ P^nf\} _{n\geq 0}$  is the evolution of density functions of random variables driven by the system.
  We refer to \cites{F,LM}. 
  
A  Markov operator cocycle is given by compositions of different  Markov operators which are provided with according to  the environment $\{ \sigma ^n(\omega )\} _{n\geq 0}$ driven by a measure-preserving  transformation $\sigma :\Omega \to \Omega $ on a probability space $(\Omega , \mathcal F, \mathbb P)$, 
\[
\mathbb N \times \Omega \times L^1(X,m) \to  L^1(X,m):  (n, \omega , f) \mapsto P_{\sigma ^{n-1}(\omega )} \circ P_{\sigma ^{n-2}(\omega )} \circ \cdots \circ P_{\omega } f
\]
 (see Definition \ref{dfn:1} for more precise description). 
So, in  nature it
possess two kinds of randomness:
\begin{itemize}
\item[(i)] The evolution of densities at each time are dominated by Markov operators $P_\omega$, 
\item[(ii)] The selection of  each Markov operators  is driven by the base dynamics $\sigma$.
\end{itemize}
The aim of this paper is to investigate how the \emph{observation} of the randomness of the state space and the environment influences  statistical properties of the system, and to give a step to understanding more complicated phenomenon in multi-stochastic systems.

Our focus lies on the mixing property.
Recall that a Markov operator $P: L^1(X,m) \to L^1(X,m)$ is said to be \emph{mixing} if 
\begin{equation}\label{eq:0912}
\int _X P^nf g dm \to \int _Xfdm \int _X gdm \quad \text{as $n\to \infty$}
\end{equation}
for  any $f\in L^1(X,m)$ and $g\in L^\infty (X,m)$ (when $P1_X =1_X$, see Remark \ref{B} for more general form).
Due to \eqref{eq:0219}, this means that two random variables $P^nf$ and $g$ are asymptotically independent so that  the system is considered to ``mix'' the state space well.
In other words, the randomness of $P$ in the sense of mixing  can be seen through  the  observables $f$ and $g$.
Hence,  for Markov operator cocycles,  the strength of the dependence  of the observables on $\omega$ expresses how one observes  the randomness of the state space and the environment. 
Furthermore, more directly, we can consider different kinds of mixing properties according to whether   the environment $\omega$ is observed as \emph{a prior  event} to the observation of $f ,g$.
According to these viewpoints, we will introduce six definitions of mixing for Markov operator cocycles (Definition \ref{eme}).
In Section \ref{s:mixing}, we show that five of them are equivalent when $\Omega$ is a compact topological space, while at least one of them are different. In the case when the Markov operator cocycle is generated by a random dynamical system over a mixing driving system, we also show that all of them imply the (conventional) mixing property of the skew-product transformation induced by the random dynamical system.

We further investigate exactness for  Markov operator cocycles. 
 Since  the observable $g$ in \eqref{eq:0912} does not appear in the definition of exactness for a Markov operator $P$ (recall that, when $P1_X=1_X$, $P$ is said to be exact if  $\displaystyle\lim _{n\to \infty}$ $\left\lVert P^nf  -\int _X fdm\right\rVert _{L^1 (X)} =0$ for all $f\in L^1(X,m)$; see also the remark following Definition \ref{d:32}), in contrast to the mixing property, we only have one  definition of exactness for  Markov operator cocycles (Definition \ref{d:32}).
We will show that Lin's criterion for exactness can be naturally extended to the case of Markov operator cocycles (Section \ref{s:exact}),
and finally, in the class of asymptotically periodic Markov operator cocycles, we 
 prove Lasota-Mackey type equivalence between mixing, exactness and asymptotic stability, as well as their relationship with the existence of an invariant density map
   (Section \ref{s:asymp}).
See Figure \ref{fig:FC} for the summary.

\begin{figure}[ht]
\begin{center}
\includegraphics[bb=70 120 800 500, width=15cm, angle=90]{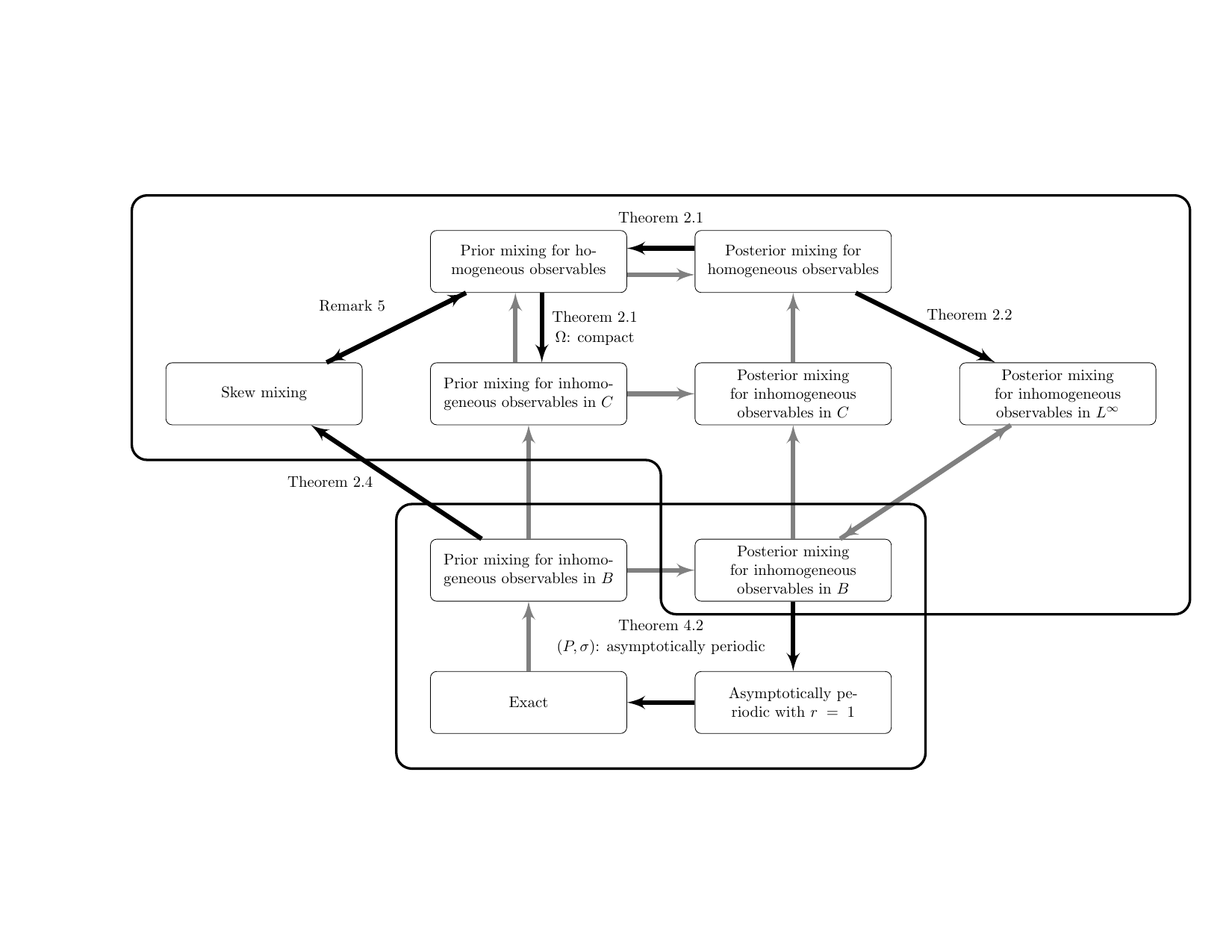}
\caption{
The relations between definitions in this paper. 
Here $B$, $C$ and $L^\infty$ are abbreviations of $B(\Omega, L^\infty (X,m))$, $C(\Omega, L^\infty (X,m))$ and  $L^\infty(\Omega, L^\infty (X,m))$, respectively.
The implications by a gray arrow represent trivial relations by definitions. 
The implication by a black arrow from prior mixing for homogeneous observables  to prior mixing for inhomogeneous observables in $C$ holds when  $\Omega$ is a compact topological space. Therefore,
 the above seven definitions are all equivalent when $\Omega$ is  compact. Moreover, the below four definitions are equivalent for an asymptotically periodic Markov operator cocycle, while it is not true for general Markov operator cocycles due to Corollary \ref{cor:0903}.
} 
\label{fig:FC}
\end{center}
\end{figure}

\subsection{Definitions of mixing and exactness}

Let $D(X, m)$ and $L^1_0(X,m)$ be subsets of $L^1 (X, m)$ given by
\begin{eqnarray}
 D(X, m) &=&\left\{f\in L^1(X, m) : f\geq 0 \  \text{$ m$-almost everywhere, $\Vert f\Vert _{L^1(X)}=1$} \right\},\nonumber\\
 L^1_0(X, m) &=&\left\{f\in L^1(X, m) : \int_X fdm = 0 \right\}. \nonumber 
\end{eqnarray}
Note that $P:L^1(X,m)\to L^1(X,m)$ is a Markov operator  if and only if  $P(D(X,m))$ $\subset D(X,m)$.

One of the most important examples of Markov operators is the \emph{Perron-Frobenius operator} induced by a non-singular transformation $T:X\to X$ (that is, $T_* m $ is absolutely continuous with respect to $m$, where $T_*m$ is the pushforward of $m$  given by $T_*m(A) =m(T^{-1} A)$ for $A\in \mathcal A$).
The Perron-Frobenius operator $L_T: L^1(X,m) \to L^1(X,m)$ of $T$ is defined by
\begin{equation}\label{eq:0914b}
L_T f = \frac{d [T_*(f m)]}{dm} \quad \text{for $f\in L^1(X,m)$},
\end{equation}
where $fm$ is a finite signed measure given by $(fm)(A) =\int _A f dm$ for $A\in \mathcal A$ and $d\mu /dm$ is the Radon-Nikodym derivative of an absolutely continuous finite signed measure $\mu$.
Note that for each $X$-valued random variable $\chi$ whose distribution is $fm$ with some $f\in D(X,m)$, $T(\chi )$ has the distribution  $(L_T f) m$ (and thus, $L_T$ is also called the transfer operator associated with  $T$).
It is straightforward to see that
\begin{equation}\label{eq:0913}
\int_X L_T fg dm = \int_X fg\circ T dm \quad\quad \text{for $f\in L^1(X,m)$  and $g\in L^{\infty}(X,m)$},
\end{equation}
and that  $L_T$ is a Markov operator.

Recall that $(\Omega, \mathcal F , \mathbb{P})$ is a probability space, and $\sigma:\Omega\to\Omega$ is a $\mathbb P$-preserving transformation.
For a measurable space $\Sigma$,  we say that a measurable map $\Phi: \mathbb N_0 \times \Omega \times \Sigma \to \Sigma$ is a \emph{random dynamical system} on $\Sigma$ over the driving system $\sigma$ if
\[
\varphi ^{(0)} _\omega = \mathrm{id} _{\Sigma}  \quad\text{and}\quad \varphi ^{(n+m)} _ \omega  = \varphi ^{(n)}_{ \sigma ^m\omega }\circ  \varphi ^{(m)}_\omega
\]
for each $n, m \in \mathbb N_0$ and $\omega \in \Omega$, with the notation $\varphi ^{(n)}_\omega =\Phi (n,\omega ,\cdot )$ and $\sigma \omega =\sigma (\omega )$, where $\mathbb N_0 =\mathbb N \cup \{0\}$. 
A standard reference for random dynamical systems is the monographs by Arnold \cite{Ar}. It is easy to check that 
\begin{equation}\label{eq:0220b2}
\varphi ^{(n)}_\omega = \varphi _{\sigma ^{n-1}\omega }\circ \varphi _{\sigma ^{n-2}\omega } \circ \cdots \circ \varphi _\omega 
\end{equation}
with the notation $\varphi _\omega = \Phi (1, \omega , \cdot )$.
Conversely, for each measurable map $\varphi : \Omega \times \Sigma\to \Sigma: (\omega , x) \mapsto \varphi _\omega (x)$, the measurable map $(n,\omega , x) \mapsto \varphi _\omega ^{(n)}(x)$ given by \eqref{eq:0220b2} is a random dynamical system. 
We call it a random dynamical system induced by $\varphi$ over $\sigma$, and simply denote it by $(\varphi , \sigma )$.
When $\Sigma$ is a Banach space and $\varphi _\omega :\Sigma \to \Sigma$ is $\mathbb P$-almost surely linear, $(\varphi , \sigma )$ is called a \emph{linear operator cocycle}. 
We give a formulation of Markov operators in random environments in terms of linear operator cocycles.

\begin{definition}\label{dfn:1}
We say that a linear operator cocycle $(P, \sigma )$  induced by a measurable map $P: \Omega \times L^1(X,m) \to L^1(X,m)$ over $\sigma$ is a  \emph{Markov operator cocycle} (or a \emph{Markov operator in random environments}) if $P_\omega =P(\omega ,\cdot ): L^1 (X,m) \to L^1 (X,m)$ is a Markov operator for $\mathbb P$-almost every $\omega \in \Omega$.
\end{definition}
 
Let $(n, \omega , f) \mapsto P^{(n)}_\omega f$ be a Markov operator cocycle induced by $P: \Omega \times L^1(X,m) \to L^1(X,m)$ such that $P_\omega =P(\omega ,\cdot )$  is the Perron-Frobenius operator $L_{T_\omega}$ associated with  a   non-singular map $T_\omega :X\to X$ for  $\mathbb P$-almost every $\omega$.
 Then, it follows from \eqref{eq:0913} that $\mathbb P$-almost surely
 \begin{align}\label{eq:0913b}
 \int_X P_{\omega}^{(n)}f g dm = \int_X f g\circ T_\omega^{(n)} dm, \quad \text{for }f\in L^1(X,m) \text{ and } g\in L^{\infty}(X,m),
 \end{align}
 where $T_\omega^{(n)} = T_{\sigma ^{n-1}\omega }\circ T_{\sigma ^{n-2}\omega }\circ \cdots \circ T_\omega$.

We are now in place to give definitions of mixing  for Markov operator cocycles.
Let $K$ be a  space consisting of measurable maps from $\Omega$ to $L^\infty (X,m)$.

\begin{definition}\label{eme}
A Markov operator cocycle.
 $(P,\sigma)$ is called
 \begin{enumerate}
 \item {\it prior mixing for homogeneous observables} if 
 for $\mathbb{P}$-almost every $\omega\in\Omega$,  any $f\in L^1_0(X,m)$ and $g\in L^\infty (X,m)$, it holds that
  \begin{align}\label{hom}
  \lim_{n\to\infty}\int_X P^{(n)}_{\omega}f g dm =0;
 \end{align}
    \item {\it posterior mixing for homogeneous observables} if for any $f\in L^1_0(X,m)$, $g\in L^\infty (X,m)$ 
 and $\mathbb{P}$-almost every $\omega\in\Omega$,
\eqref{hom} holds;
 \item {\it prior mixing for inhomogeneous observables in $K$} if 
 for $\mathbb{P}$-almost every $\omega\in\Omega$,  any $f\in L^1_0(X,m)$ and $g\in K$, it holds that
  \begin{align}\label{ihom}
  \lim_{n\to\infty}\int_{X}P^{(n)}_{\omega}f g_{\sigma^n\omega} dm=0;
  \end{align}
 \item {\it posterior mixing for  inhomogeneous  observables in $K$} if for any $f\in L^1_0(X,m)$, $g\in K$ 
 and $\mathbb{P}$-almost every $\omega\in\Omega$,
\eqref{ihom} holds.
 \end{enumerate}
\end{definition}
In the prior  case (the posterior case), the observation of the environment $\omega$ is \emph{a prior event} (\emph{a posterior event}, respectively) to the observation of $f$ and $g$.
As the class of   inhomogeneous observables $K$ in Definition \ref{eme}, we will consider the following two fundamental  classes.
\begin{itemize}
\item[(i)]  $B(\Omega, L^\infty (X,m))$: the set of all bounded and measurable maps from $\Omega$ to $L^\infty (X,m)$.
\item[(ii)]  $C(\Omega, L^\infty (X,m))$: the set of all bounded and continuous maps from $\Omega$ to $L^\infty (X,m)$ (when $\Omega$ is a topological space and $\mathcal F$ is its Borel $\sigma$-field).
\end{itemize}

\begin{remark}\label{B}
The above definitions need not require an invariant density map for the Markov operator cocycle $(P,\sigma)$.
We say that a measurable map $h: \Omega \to D(X,m)$ is  an {\it invariant density map} for $(P,\sigma)$ if $P_{\omega}h_{\omega}=h_{\sigma\omega}$ holds for $\mathbb{P}$-almost every $\omega\in\Omega$ where $h_{\omega}=h(\omega)$.
Now we assume that there exist  an invariant density map  $h:\Omega \to D(X,m)$  for $(P,\sigma)$ such that for $\mathbb{P}$-almost every $\omega\in\Omega$,
 \begin{align}\label{A}
 \lim_{n\to\infty}m \left(\operatorname*{supp}P_{\omega}^{(n)}1_X\setminus \operatorname*{supp}P_{\omega}^{(n)}h_{\omega}\right)=0.
 \end{align}
Then by (\ref{A}) and the fact that $P^{(n)}_{\omega}f-h_{\sigma^{n}\omega}=P^{(n)}_{\omega}(f-h_{\omega})\in L^1_0(X,m)$ for $f\in D(X,m)$,
one can easily check that $(P,\sigma)$ is
 prior mixing for homogeneous observables if and only if for $\mathbb{P}$-almost every $\omega\in\Omega$,  any $f\in D(X,m)$ and $g\in L^{\infty}(X,m)$, it holds that
  \begin{align*}
  \lim_{n\to\infty}\int_X \left(P^{(n)}_{\omega}f-h_{\sigma^n\omega}\right)g dm = 0.
  \end{align*}
Furthermore, when $P_\omega$ is the Perron-Frobenius operator $L_{T_\omega}$ associated with  a  non-singular map $T_\omega : X\to X$, by \eqref{eq:0913b}, 
it is also equivalent to that  for $\mathbb{P}$-almost every $\omega\in\Omega$, any $f\in L^1(X,\mu _\omega )$ and $g\in L^{\infty}(X,m)$,
  \begin{align}\label{eq:0913c}
 \int_Xfg\circ T_\omega ^{(n)} d\mu _\omega  - \int _X fd\mu _\omega \int _X g d\mu _{\sigma^n\omega} \to 0 \quad \text{as $n\to \infty$},
  \end{align}
  where $\mu _\omega =h_\omega m$.
%
Moreover, we  can replace ``for any $f\in L^1(X,\mu _\omega)$'' in the previous sentence with ``for any measurable function $f:  \Omega \times X\to \mathbb R$ such that $f_\omega =f(\omega , \cdot )\in L^1(X,\mu _\omega)$  $\mathbb P$-almost surely'', and ``$f$'' in \eqref{eq:0913c} with ``$f_\omega$''.
 Similar equivalent conditions can be found for other types of mixing in Definition \ref{eme}.
 
 All kinds of mixing in Definition \ref{eme} were adopted in literature, especially  in the form of \eqref{eq:0913c} to  discuss  mixing for random dynamical systems.
 For instance, we refer to 
Baladi et al.~\cites{BKS,BY} and Buzzi~\cite{B} for the definition 1, 
Dragi\v{c}evi\'{c} et al.~\cite{DFGV} for the definition 2, 
Bahsoun et al.~\cite{BBR} for the definition 3, and 
Gundlach \cite{Gundlach1996} for the definition 4. 
Moreover, in the deterministic case (i.e.~$\Omega$ is a singleton), all the definitions  are equivalent to the usual definition of mixing  for a single Markov operator  \cite{LM}.
\end{remark}

\begin{remark}\label{rm:0903b}
Another natural candidate for the class of  inhomogeneous observable is the Bochner-Lebesgue space $L^\infty(\Omega, L^\infty (X,m))$, that is,  the Kolmogorov quotient (by equality $\mathbb P$-almost surely) of the space of all $\mathbb P$-essentially bounded and Bochner measurable maps from $\Omega$ to $L^\infty (X,m)$ (and \eqref{ihom} is interpreted as it holds under the usual identification between an equivalent class and a representative of the class). 
However, in the case $K=L^\infty(\Omega, L^\infty (X,m))$, the prior version 3 does not make sense because one can find an equivalent class $[g] \in L^\infty(\Omega, L^\infty (X,m))$ and maps $g_1, g_2\in [g]$ such that \eqref{ihom} holds for $g=g_1$ while \eqref{ihom} does not hold for $g=g_2$, see  Subsection \ref{ss:ce}.
On the other hand, the posterior version 4 makes sense for $K=L^\infty(\Omega, L^\infty (X,m))$, and indeed, its relationship with posterior mixing for homogeneous observables will be discussed in Theorem \ref{thm:0917b}.
\end{remark}

By the definitions, we immediately see that the prior mixing implies the posterior mixing (that is, $(1)\Rightarrow(2)$ and $(3)\Rightarrow(4)$ in Definition \ref{eme}). 
It is also obvious that the prior (posterior) mixing for inhomogeneous observables in $B(\Omega, L^\infty (X,m))$ or $C(\Omega, L^\infty (X,m))$ implies the prior (posterior, respectively) mixing for homogeneous observables. 
Refer to Figure \ref{fig:FC}.

We next define exactness for Markov operator cocycles.
\begin{definition}\label{d:32}
A Markov operator cocycle
 $(P,\sigma)$ is called {\it exact} if for $\mathbb{P}$-almost every $\omega\in\Omega$ and any $f\in L^1_0(X,m)$, it holds that
  \begin{align}\label{exact}
  \lim_{n\to\infty}\left\lVert P^{(n)}_{\omega}f \right\rVert_{L^1(X)}=0.
  \end{align}
  \end{definition}

As in Remark \ref{B}, we can easily see that the exactness of a Markov operator cocycle  $(P,\sigma)$ 
is equivalent to that
for $\mathbb{P}$-almost every $\omega\in\Omega$ and  any $f\in D(X,m)$,
  \begin{align*}
  \lim_{n\to\infty}\left\lVert P^{(n)}_{\omega}f-h_{\sigma^n\omega} \right\rVert_{L^1(X)}=0.
  \end{align*}
In Section \ref{s:exact}, we will see another equivalent condition of the exactness in the case when $(P,\sigma)$ is associated with a random dynamical system on $X$. 
The relationship between mixing, exactness and asymptotic stability will be also discussed in Section \ref{s:asymp}, see again Figure \ref{fig:FC} for a summary.

\section{Mixing}\label{s:mixing}

\subsection{Equivalence}
We show  the equivalence between prior/posterior mixing for homogeneous observables and prior/posterior mixing for inhomogeneous observables in $C(\Omega, L^\infty (X,m))$ when $\Omega$ is  a compact topological space.

\begin{theorem}\label{Thm:mixing}
Assume that $\Omega$ is a compact topological space.
Then, the followings are equivalent:
\begin{enumerate}
\item $(P,\sigma)$ is prior  mixing for homogeneous observables.
\item $(P,\sigma)$ is posterior  mixing for homogeneous observables.
\item $(P,\sigma)$ is prior  mixing for inhomogeneous observables in $C(\Omega,L^\infty(X,m))$.
\item $(P,\sigma)$ is posterior mixing for inhomogeneous observables in $C(\Omega,L^\infty(X,m))$.
\end{enumerate}
\end{theorem}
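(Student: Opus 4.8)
The plan is to run a cycle of implications built on the elementary facts already recorded, namely $(1)\Rightarrow(2)$, $(3)\Rightarrow(4)$, $(3)\Rightarrow(1)$ and $(4)\Rightarrow(2)$. Granting these, it suffices to prove the two implications $(1)\Rightarrow(3)$ and $(2)\Rightarrow(1)$: the first yields $(1)\Leftrightarrow(3)$, and then the chain $(3)\Rightarrow(4)\Rightarrow(2)\Rightarrow(1)$ closes the loop and sandwiches $(4)$ as well. The implication $(1)\Rightarrow(3)$ is where the compactness of $\Omega$ is genuinely used; $(2)\Rightarrow(1)$ is a measure-theoretic ``uniformization of the exceptional set'' and is the step I expect to be the main obstacle.

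\emph{Step $(1)\Rightarrow(3)$.} Fix a $\mathbb P$-null set $N$ outside of which \eqref{hom} holds for all $f\in L^1_0(X,m)$ and all $g\in L^\infty(X,m)$; I claim the same $N$ works for \eqref{ihom}. Let $g\in C(\Omega,L^\infty(X,m))$ and $\varepsilon>0$. Since $\Omega$ is compact, $g(\Omega)$ is a compact, hence totally bounded, subset of $L^\infty(X,m)$: pick $\psi_1,\dots,\psi_k\in L^\infty(X,m)$ with $g(\Omega)\subset\bigcup_i B(\psi_i,\varepsilon)$, set $U_i=g^{-1}(B(\psi_i,\varepsilon))$, and (using that a compact Hausdorff space is normal) take a continuous partition of unity $\{a_i\}_{i=1}^k\subset C(\Omega)$ subordinate to $\{U_i\}$. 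Then $\tilde g_\eta:=\sum_i a_i(\eta)\psi_i$ satisfies $\sup_{\eta\in\Omega}\|g_\eta-\tilde g_\eta\|_{L^\infty(X)}\le\varepsilon$, so for $\omega\notin N$,
\[
\left|\int_X P^{(n)}_\omega f\,g_{\sigma^n\omega}\,dm\right|
\le\sum_{i=1}^k a_i(\sigma^n\omega)\left|\int_X P^{(n)}_\omega f\,\psi_i\,dm\right|
+\left\|P^{(n)}_\omega f\right\|_{L^1(X)}\varepsilon
\le\sum_{i=1}^k\left|\int_X P^{(n)}_\omega f\,\psi_i\,dm\right|
+\|f\|_{L^1(X)}\varepsilon,
\]
using $0\le a_i\le1$ and the $L^1$-contractivity $\|P^{(n)}_\omega f\|_{L^1(X)}\le\|f\|_{L^1(X)}$ of Markov operators. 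The finite sum tends to $0$ as $n\to\infty$ because $\omega\notin N$; letting $\varepsilon\downarrow0$ gives \eqref{ihom}. This argument collapses for $K=B(\Omega,L^\infty(X,m))$ — a merely bounded measurable $g$ need not have totally bounded range — consistent with that version of $(3)$ being strictly stronger.

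\emph{Step $(2)\Rightarrow(1)$.} Assume $L^1(X,m)$ is separable (equivalently, $\mathcal A$ is countably generated modulo $m$-null sets), fix a countable $\|\cdot\|_{L^1}$-dense set $\{f_\ell\}\subset L^1_0(X,m)$ and a countable algebra $\mathcal A_0$ generating $\mathcal A$ modulo $m$-null sets. By $L^1$-contractivity, for fixed $\omega$ the validity of \eqref{hom} for all $f\in L^1_0(X,m)$ and a fixed $g$ follows once it holds for every $f_\ell$; thus the real content of $(1)$ is to make the exceptional set independent of $g$. Intersecting the null sets supplied by $(2)$ over the countable family $\{(f_\ell,1_A)\}$ produces one $\mathbb P$-null set $N$ off which $\int_X P^{(n)}_\omega f_\ell\,1_A\,dm\to0$ for all $\ell$ and all $A\in\mathcal A_0$. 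It remains to prove the \emph{deterministic} claim that, for $\omega\notin N$, this forces $P^{(n)}_\omega f_\ell\to0$ weakly in $L^1(X,m)$, i.e.\ that testing against a generating algebra upgrades to testing against all of $L^\infty(X,m)$. I would attack this using weak-$*$ sequential compactness of the balls of $L^\infty(X,m)=(L^1(X,m))^{*}$ (valid since $L^1(X,m)$ is separable) applied to the Koopman iterates, together with the Dunford--Pettis fact that a weakly convergent sequence in $L^1$ is automatically uniformly integrable, so that the only obstruction to the upgrade is an escape of mass along the cocycle, which a Vitali--Hahn--Saks / monotone-class argument is meant to exclude. One organizing observation: because $P^{(n+1)}_\omega=P^{(n)}_{\sigma\omega}\circ P_\omega$ and $P_\omega$ maps $L^1_0(X,m)$ into itself, the set of $\omega$ for which $(1)$ holds is $\sigma$-invariant modulo $\mathbb P$-null sets, so it is enough to produce positive $\mathbb P$-measure once $\sigma$ is ergodic (and pass to the ergodic decomposition in general).

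I expect the genuine difficulty to lie precisely in that last point — ruling out loss of uniform integrability along the cocycle and thereby passing from a countable generating algebra to the non-separable space $L^\infty(X,m)$; the partition-of-unity argument of Step $(1)\Rightarrow(3)$ and the trivial implications then close the equivalence.
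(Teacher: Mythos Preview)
Your argument for $(1)\Rightarrow(3)$ is correct and coincides with the paper's: both use compactness of $\Omega$ to produce a finite $\varepsilon$-net $\{\psi_1,\dots,\psi_k\}$ for the range $g(\Omega)\subset L^\infty(X,m)$ and then bound $\left|\int P^{(n)}_\omega f\,g_{\sigma^n\omega}\,dm\right|$ by $\max_i\left|\int P^{(n)}_\omega f\,\psi_i\,dm\right|+\|f\|_{L^1}\varepsilon$. The partition of unity is an unnecessary embellishment --- the paper simply selects, for each $\eta\in\Omega$, an index $i(\eta)$ with $\|g_\eta-\psi_{i(\eta)}\|_{L^\infty}<\varepsilon$ --- and dropping it has the side benefit of not requiring $\Omega$ to be Hausdorff (which the theorem does not assume, and which you need for compact $\Rightarrow$ normal).

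For $(2)\Rightarrow(1)$ you have correctly located the crux, but neither your outline nor the paper's proof actually closes it. The paper's argument is shorter in appearance: it asserts a countable \emph{norm-dense} subset $\{g_l\}$ of $L^\infty(X,m)$ (``by simple function approximation with rational coefficients''), intersects the null sets $\Omega_0(f_k,g_l)$, and passes to arbitrary $g$ via $\left|\int P^{(n)}_\omega f(g-g_l)\,dm\right|\le\|f\|_{L^1}\|g-g_l\|_{L^\infty}$. But $L^\infty(X,m)$ is separable only when $(X,\mathcal A,m)$ is essentially purely atomic, so that assertion fails in the generality stated; the paper has the same gap you flagged, just hidden. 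Your alternative --- test against indicators of a countable generating algebra $\mathcal A_0$ and upgrade --- runs into exactly the obstacle you name: convergence of $\int_A P^{(n)}_\omega f\,dm$ for $A\in\mathcal A_0$ does not force weak $L^1$-convergence without uniform integrability of $\{P^{(n)}_\omega f\}_n$, and a general Markov cocycle can concentrate mass (e.g.\ $P=L_T$ with $T(x)=x/2$ on $[0,1]$ gives $P^n1_X=2^n1_{[0,2^{-n}]}$). The tools you list go the wrong direction: Dunford--Pettis deduces uniform integrability \emph{from} weak convergence, and Vitali--Hahn--Saks requires convergence of $\int_A P^{(n)}_\omega f\,dm$ for \emph{all} $A\in\mathcal A$, which is precisely what is in question. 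So $(2)\Rightarrow(1)$ remains open at this level of generality in both your write-up and the paper's.
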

\begin{proof}
As mentioned,  the implications $(3)\Rightarrow(4)$ and
$(4)\Rightarrow(2)$  immediately follow from the definitions.
We show $(2)\Rightarrow(1)$. 
Assume that $(P,\sigma)$ is posterior mixing for homogeneous observables, that is, for any $f\in L_0^1(X,m)$ and $g\in L^\infty(X,m)$,  there is a  measurable set $\Omega _0(f,g)$ such that $\mathbb P(\Omega _0(f,g))=1$ and  \eqref{hom} holds for each $\omega \in \Omega _0(f,g)$.
By the simple function approximation with rational coefficients, we can find countable dense subsets $\{ f_k\} _{k\in \mathbb N}$ of $L_0^1(X,m)$ and  $\{ g_l\} _{l\in \mathbb N}$ of $L^\infty(X,m)$. 
Define a measurable set $\Omega _0$ by
\[
\Omega _0 = \bigcap _{k\in \mathbb N}\bigcap _{l\in \mathbb N}\Omega _0(f_k,g_l),
\]
then it is straightforward to see that $\mathbb P(\Omega _0) =1$ and \eqref{hom} holds for any $\omega\in \Omega _0 $, $f\in L_0^1(X,m)$ and $g\in L^\infty(X,m)$, i.e.~$(P,\sigma)$ is prior mixing for homogeneous observables.

We next  show $(1)\Rightarrow(3)$. 
Assume that $(P,\sigma)$ is prior mixing for homogeneous observables, that is, there is a  measurable set $\Omega _0$ with $\mathbb P(\Omega _0)=1$ such that  \eqref{hom} holds for any $\omega \in \Omega _0$,  $f\in L_0^1(X,m)$ and $g\in L^\infty(X,m)$.
Fix such an $\Omega _0$.
Fix also  $f\in L_0^1(X,m)$, $g\in C(\Omega , L^\infty(X,m))$ and $\epsilon >0$.
Then, since $\Omega$ is compact, we get finitely many functions $\{ g_i\} _{i=1}^I \subset L^\infty(X,m)$ such that, for any $\omega \in \Omega$ there is $1\leq i(\omega ) \leq I$ satisfying
\[
\Vert g_\omega - g_{i(\omega )}  \Vert _{L^\infty(X)} < \epsilon .
\]
(Note that $\{ \{ \omega \in \Omega : \Vert g_\omega -\tilde g\Vert _{L^\infty(X)} < \epsilon \} : \tilde g\in L^\infty(X) \}$ is an open covering of $\Omega$ by virtue of the continuity of $g$.)
For convenience, let $g_0 =1_X$.

We further fix $\omega \in \Omega_0$.
By applying \eqref{hom} to $g= g_i$ with $0\leq i\leq I$, one can find $N_i \equiv N_i(\omega ,f)\in \mathbb N$ such that
\[
\left\vert  \int _X P^{(n)}_{\omega}f g_i  dm \right\vert < \epsilon \quad \text{for all $n\geq N_i$}.
\]
Hence, for any  $n\geq \max _{0\leq i\leq I} N_i$,
\begin{align}\label{eq:0917}
\notag\left\vert  \int _X  P^{(n)}_{\omega}f g_{\sigma ^n \omega} dm \right\vert 
&\leq \left\vert  \int _X  P^{(n)}_{\omega}f g_{i(\sigma ^n \omega )} dm \right\vert 
+ \left\Vert g_{\sigma ^n \omega}  -  g_{i(\sigma ^n \omega )} \right\Vert _{L^\infty(X)}  \left\Vert P^{(n)}_{\omega}f \right\Vert _{L^1(X)}\\
&< (1+ \Vert f\Vert _{L^1(X)}  )\epsilon  .\nonumber
\end{align}
Since $\epsilon >0$ is arbitrary, we conclude
\[
\lim _{n\to \infty} \int _X  P^{(n)}_{\omega}f g_{\sigma ^n \omega} dm =0 \quad \text{for all $\omega \in \Omega _0$},
\]
which implies that $(P,\sigma)$ is prior  mixing for inhomogeneous  observables in $C(\Omega,L^\infty$ $(X,m))$.
This completes the proof.
\end{proof}

\begin{remark}\label{rm:0903}
As in the proof, the compactness of $\Omega$ in Theorem \ref{Thm:mixing} is only needed to show the implication of prior  mixing for inhomogeneous observables in $C(\Omega,L^\infty(X,$ $m))$ from prior  mixing for homogeneous observables.
\end{remark}

We also can show the following equivalence for observables in $L^\infty(\Omega,L^\infty(X,m))$.
\begin{theorem}\label{thm:0917b}
If $(P,\sigma)$ is posterior  mixing for homogeneous observables,
then $(P,\sigma)$ is posterior mixing for inhomogeneous observables in $L^\infty(\Omega,L^\infty(X,m))$.
\end{theorem}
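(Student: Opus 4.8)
The plan is to reduce the inhomogeneous statement for an arbitrary $g\in L^\infty(\Omega,L^\infty(X,m))$ to the case of finitely-valued (``simple'') observables, for which posterior mixing for homogeneous observables applies almost directly, and then to run a limiting argument in the approximation. Throughout fix $f\in L^1_0(X,m)$.

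Since $g$ is Bochner measurable and $\mathbb P$-essentially bounded, one can choose measurable maps $g_j\colon\Omega\to L^\infty(X,m)$, each taking only finitely many values, say $g_j=\sum_{i=1}^{I_j}1_{A_i^{(j)}}\,b_i^{(j)}$ with $\{A_i^{(j)}\}_{i}$ a measurable partition of $\Omega$ and $b_i^{(j)}\in L^\infty(X,m)$, such that $\|g_j(\omega)-g(\omega)\|_{L^\infty(X)}\to 0$ for $\mathbb P$-almost every $\omega$; after truncating each $b_i^{(j)}$ at the level $\|g\|_{L^\infty(\Omega,L^\infty(X,m))}=:C$ we may also assume $\|b_i^{(j)}\|_{L^\infty(X)}\le C$ for all $i,j$. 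For a fixed $j$, apply posterior mixing for homogeneous observables to each of the finitely many pairs $(f,b_i^{(j)})$ and intersect the resulting full-measure sets to obtain a measurable $\Omega_j$ with $\mathbb P(\Omega_j)=1$ on which $\int_X P^{(n)}_\omega f\,b_i^{(j)}\,dm\to 0$ for every $i$ as $n\to\infty$. Since $\{\sigma^{-n}A_i^{(j)}\}_i$ is again a partition of $\Omega$, for $\omega\in\Omega_j$ exactly one of the indicators $1_{\sigma^{-n}A_i^{(j)}}(\omega)$ is nonzero, so
\[
\Big|\int_X P^{(n)}_\omega f\,(g_j)_{\sigma^n\omega}\,dm\Big|\;\le\;\max_{1\le i\le I_j}\Big|\int_X P^{(n)}_\omega f\,b_i^{(j)}\,dm\Big|\;\longrightarrow\;0\qquad(n\to\infty),\quad\omega\in\Omega_j .
\]

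Next I would control the error of replacing $g$ by $g_j$. Using that each $P^{(n)}_\omega$ is an $L^1$-contraction, for $\mathbb P$-almost every $\omega$ and every $n$,
\[
\Big|\int_X P^{(n)}_\omega f\,g_{\sigma^n\omega}\,dm\Big|\;\le\;\Big|\int_X P^{(n)}_\omega f\,(g_j)_{\sigma^n\omega}\,dm\Big|+\|f\|_{L^1(X)}\,\big\|g_{\sigma^n\omega}-(g_j)_{\sigma^n\omega}\big\|_{L^\infty(X)} .
\]
Since $\|g_j(\cdot)-g(\cdot)\|_{L^\infty(X)}\to 0$ $\mathbb P$-a.e.\ and is bounded by $2C$, dominated convergence gives $\int_\Omega\|g_j(\omega)-g(\omega)\|_{L^\infty(X)}\,d\mathbb P\to 0$, and, $\sigma$ being $\mathbb P$-preserving, the same holds for $\int_\Omega\|g_{\sigma^n\omega}-(g_j)_{\sigma^n\omega}\|_{L^\infty(X)}\,d\mathbb P$ uniformly in $n$. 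Together with the previous step this already yields $\int_X P^{(n)}_\omega f\,g_{\sigma^n\omega}\,dm\to 0$ in $L^1(\Omega)$, and the remaining task is to upgrade this to $\mathbb P$-almost sure convergence. For this I would bring in extra regularity forced by the hypothesis: posterior mixing for homogeneous observables, tested against a countable dense subset of $L^\infty(X,m)$, means $P^{(n)}_\omega f\rightharpoonup 0$ weakly in $L^1(X,m)$ for $\mathbb P$-almost every $\omega$, so by the Dunford--Pettis theorem $\{P^{(n)}_\omega f\}_{n\ge0}$ is uniformly integrable; splitting $X$ into the region where $|P^{(n)}_\omega f|$ is large (where uniform integrability provides an $n$-independent gain) and its complement (where the $L^\infty(X)$-error may be traded for an $L^1(X,m)$-error), and combining this with Egorov's theorem for $g_j\to g$ and a choice of the approximation index along a subsequence adapted to the orbit of $\omega$, one expects to localise the estimate to pieces of the orbit on which the approximation is uniform and thereby conclude.

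The main obstacle is precisely this last passage to the almost-sure limit. The naive route — approximating $g$ uniformly by finitely-valued maps so that $\|g_{\sigma^n\omega}-(g_j)_{\sigma^n\omega}\|_{L^\infty(X)}$ is small for all $n$ simultaneously — is unavailable, since finitely-valued simple maps are \emph{not} dense in $L^\infty(\Omega,L^\infty(X,m))$ for the $\mathbb P$-essential supremum norm: the essential range of $g$ is separable by Bochner measurability but need not be totally bounded. Hence the set on which $g_j$ is still far from $g$ has small but strictly positive measure and is therefore visited infinitely often by $\mathbb P$-almost every $\sigma$-orbit, so uniform control along orbits cannot be obtained by a purely soft argument; reconciling this with the per-$j$ convergence of the second paragraph, via the uniform-integrability and diagonalisation scheme indicated above, is the delicate core of the proof.
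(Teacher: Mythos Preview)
Your proposal stops short of a proof: the final ``upgrade to $\mathbb P$-almost-sure convergence'' is only a sketch, and the Dunford--Pettis/Egorov route does not close the gap. Uniform integrability of $\{P^{(n)}_\omega f\}_n$ lets you split off the region $\{|P^{(n)}_\omega f|>M\}$, but the remaining term $M\,\|g_{\sigma^n\omega}-(g_j)_{\sigma^n\omega}\|_{L^1(X)}$ still depends on $\sigma^n\omega$, and by Poincar\'e recurrence the orbit visits any Egorov exceptional set of positive measure infinitely often, so the obstruction reappears intact. More structurally, any reduction to the countable family of values $\{b_i^{(j)}\}$ ultimately needs $\int P^{(n)}_\omega f\,b_i^{(j)}\,dm\to 0$ with some uniformity in $(i,j)$, and weak convergence $P^{(n)}_\omega f\rightharpoonup 0$ in $L^1(X,m)$ yields this only when the family is relatively norm-compact in $L^\infty(X,m)$ --- precisely the total-boundedness you have already ruled out.

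The paper's proof, by contrast, takes exactly the ``naive route'' you reject: it asserts that Bochner measurability of $g$ alone furnishes finitely-valued simple functions $g^k$ and a $\mathbb P$-full set $\Omega_1$ with $\sup_{\omega\in\Omega_1}\|g_\omega-g^k_\omega\|_{L^\infty(X)}\to 0$; granting this, one fixes $k$ with uniform error below $\epsilon$, intersects the finitely many full-measure sets coming from the values $\tilde g^k_1,\dots,\tilde g^k_{I(k)}$, and concludes just as in your second paragraph. Your objection to this approximation step is well taken --- the essential range of a Bochner-measurable essentially bounded map into $L^\infty(X,m)$ is separable but need not be totally bounded, so finitely-valued simple maps are not dense in $L^\infty(\Omega,L^\infty(X,m))$ in general --- and the paper's argument sidesteps rather than resolves the difficulty you have isolated.
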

\begin{proof}
Assume that $(P,\sigma)$ is posterior mixing for homogeneous observables, i.e.~for any $f\in L_0^1(X,m)$ and $g\in L^\infty(X,m)$,  there is a  measurable set $\Omega _0(f,g)$ such that $\mathbb P(\Omega _0(f,g))=1$ and  \eqref{hom} holds for each $\omega \in \Omega _0(f,g)$.
Fix $f\in L^1_0(X,m)$ and  $g\in L^\infty(\Omega,L^\infty(X,m))$.
We only consider the case when $g_\omega(x)$ is positive for $\mathbb P\times m$-almost every $(\omega ,x)\in \Omega \times X$. (If not, we consider the usual decomposition $g=g^+-g^-$ with $g^+_\omega (x) = \max \{ g_\omega (x) ,0\}$ and $g^-_\omega (x) = \max \{ - g_\omega (x) ,0\}$.)

Since $g\in L^\infty(\Omega,L^\infty(X,m))$ (in particular, $g$ is Bochner measurable), there is a sequence of  simple functions $\{ g^k\} _{k\in \mathbb N} \subset L^\infty(\Omega,L^\infty(X,m))$ of the form
\[
g^k_\omega (x)=\sum_{i=1}^{I(k)}  g_i^k(x) 1_{F_i^k}(\omega) \quad \left( g_i^k \in L^\infty(X,m),\ F_i^k\in\mathcal{F}\right)
\]
and a $\mathbb P$-full measure set $\Omega _1$
 such that 
$
\sup _{\omega \in \Omega _1} \left\Vert g_\omega - g_\omega ^k \right\Vert _{L^\infty (X)}\to 0$
as $k\to \infty$.
Define a $\mathbb P$-full measure set $\Omega _0$ by
\[
\Omega _0 =  \bigcap _{k\in \mathbb N}\bigcap _{1\leq i\leq I(k)}\Omega _0(f,g_i^k).
\]
Let $\Omega _2 = \Omega _0 \cap (\bigcap _{n\geq 0} \sigma ^{-n}\Omega _1)$, then $\mathbb P(\Omega _2)=1$ by the invariance of $\mathbb P$ for $\sigma$. 

Fix $\omega \in \Omega _2$ and $\epsilon >0$.
Fix also $k\in \mathbb N$ such that 
\[
\left\Vert g_{\sigma ^n\omega }- g_{\sigma ^n\omega} ^k \right\Vert _{L^\infty (X)} <\epsilon \quad \text{for all $n\in \mathbb N$}.
\]
Calculate that 
\begin{eqnarray}
\left\lvert \int_X P_\omega^{(n)}f g^k_{\sigma^n\omega}dm \right\rvert
&\leq& \sum_{i=1}^{I(k)}  \left\lvert \int_X P_\omega^{(n)}f  g_{i}^k dm\right\rvert  1_{F_i^k}(\sigma^n\omega)\nonumber\\
&\leq& I(k) \max _{1\leq i \leq I(k)}  \left\lvert \int_X P_\omega^{(n)}f  g_{i}^k dm\right\rvert.\nonumber
\end{eqnarray}
On the other hand, by the choice of $\omega$,
for any $1\leq i\leq I(k)$
one can find a positive integer $N_{i}=N_{i}(f, \omega, k )$ such that if $n\geq N_{i}$, then
\[
\left\lvert \int_X P^{(n)}_\omega f g_i^k dm \right\rvert<\frac{\epsilon }{I(k)}.
\]
Thus, for any $n\geq\max_{1\leq i\leq I(k)}N_i$,
\begin{align*}
\notag\left\vert  \int _X  P^{(n)}_{\omega}f g_{\sigma ^n \omega} dm \right\vert 
&\leq \left\vert  \int _X  P^{(n)}_{\omega}f g_{\sigma ^n \omega}^k dm \right\vert 
+ \left\Vert g_{\sigma ^n \omega}  -  g_{\sigma ^n \omega}^k \right\Vert _{L^\infty(X)}  \left\Vert P^{(n)}_{\omega}f \right\Vert _{L^1(X)}\\
&< (1+ \Vert f\Vert _{L^1(X)}  )\epsilon  .
\end{align*}
Since $\epsilon >0$ is arbitrary, we conclude
 that $(P,\sigma)$ is prior  mixing for inhomogeneous  observables in $L^\infty(\Omega,L^\infty(X,m))$.
\end{proof}

\subsection{Counterexamples}\label{ss:ce}

We give an example exhibiting prior  mixing for homogeneous observables but not  for inhomogeneous observables in  $B(\Omega, L^\infty (X,m))$. Let $T:X\to X$ be a measurably bijective  map (up to zero  $m$-measure sets) preserving $m$ such that the Perron-Frobenius operator $L_T$ associated with $T$ is mixing (note that $L_T 1_X =1_X$ due to the invariance of $m$ and  recall \eqref{eq:0912}). Note that the baker map is well-known example as such map $T$.
Assume that  there is a $\mathbb P$-positive measure set $\Omega _0$ such that the forward orbit of $\omega\in\Omega_0$ is measurable but not finite (e.g.~$\Omega =[0,1]$,  $\mathbb P$ is the Lebesgue measure on $\Omega$ and $\sigma$ is the tent map),
and that $P_\omega =L_T$ for all $\omega \in \Omega _0$. 
By construction, this Markov operator cocycle $(P, \sigma)$  is prior mixing for homogeneous observables. 

\begin{theorem}\label{cthm}
The Markov operator cocycle $(P, \sigma)$ given above is not prior mixing for inhomogeneous observables in  $B(\Omega, L^\infty (X,m))$.
\end{theorem}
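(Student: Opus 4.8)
The plan is to beat the (homogeneous) mixing by choosing the inhomogeneous observable $g$ adversarially: since $T$ is measurably bijective and $m$-preserving, $L_T$ is invertible with $L_T^n f = f\circ T^{-n}$, so along the forward orbit of a fixed $\omega$ we can let $g_{\sigma^n\omega}$ precisely undo the action of $T^{-n}$ and keep the correlation constant. First I would fix $f\in L^1_0(X,m)$ with $f\neq 0$ and set $v=\operatorname{sgn}f\in L^\infty(X,m)$, so that $c:=\int_X fv\,dm=\|f\|_{L^1(X)}>0$. I would then restrict attention to those $\omega\in\Omega_0$ whose whole forward orbit lies in $\Omega_0$ --- a set of positive $\mathbb P$-measure (for instance $\Omega_0$ itself in the situation $\Omega=[0,1]$, $\mathbb P$ Lebesgue, where $P_\omega=L_T$ for all $\omega$) --- so that $P^{(n)}_\omega=L_T^n$ for every $n\ge0$.

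Fix such an $\omega$, and write $O_\omega=\{\sigma^n\omega:n\ge0\}$. By hypothesis $O_\omega$ is measurable, and since it is infinite the points $\sigma^0\omega,\sigma^1\omega,\dots$ are pairwise distinct; moreover $\{\sigma^n\omega\}=O_{\sigma^n\omega}\setminus O_{\sigma^{n+1}\omega}$ is measurable (both orbits meet $\Omega_0$, hence are measurable). Define $g=g^{(\omega)}:\Omega\to L^\infty(X,m)$ by $g_{\sigma^n\omega}=v\circ T^{-n}$ for $n\ge0$ and $g_\xi=0$ for $\xi\notin O_\omega$. Then $\sup_\xi\|g_\xi\|_{L^\infty(X)}\le\|v\|_{L^\infty(X)}\le1$, and $g$ is constant on each member of a countable measurable partition of $\Omega$, hence strongly measurable; so $g\in B(\Omega,L^\infty(X,m))$. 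For every $n\ge0$, using $P^{(n)}_\omega f=f\circ T^{-n}$ and the $T$-invariance of $m$,
\[
\int_X P^{(n)}_\omega f\;g_{\sigma^n\omega}\,dm=\int_X (f\circ T^{-n})(v\circ T^{-n})\,dm=\int_X fv\,dm=c>0,
\]
so $\int_X P^{(n)}_\omega f\,g_{\sigma^n\omega}\,dm\not\to0$ and \eqref{ihom} fails for this $\omega$, this $f$ and this $g$.

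Since $\omega$ above ranged over a set of positive $\mathbb P$-measure, the collection of $\omega$ admitting some $f\in L^1_0(X,m)$ and $g\in B(\Omega,L^\infty(X,m))$ for which \eqref{ihom} fails is not $\mathbb P$-null; hence there is no $\mathbb P$-full-measure set on which \eqref{ihom} holds for all $f$ and all $g\in B(\Omega,L^\infty(X,m))$, i.e.\ $(P,\sigma)$ is not prior mixing for inhomogeneous observables in $B(\Omega,L^\infty(X,m))$. The one genuinely delicate point --- where I would be most careful --- is the measurability of the tailored observable $g$: one needs that the individual orbit points $\sigma^n\omega$, not merely the orbit as a set, are measurable (this is exactly why the hypothesis requires the forward orbit to be both infinite and measurable) and that a countably-valued map constant on a measurable partition belongs to $B(\Omega,L^\infty(X,m))$; granting this, the remainder is the one-line computation above, which uses only the invertibility and $m$-invariance of $T$ and not the mixing of $L_T$ (the latter being needed only to guarantee that the cocycle is prior mixing for homogeneous observables).
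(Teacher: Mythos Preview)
Your proof is correct and follows essentially the same approach as the paper: for each $\omega$ in a positive-measure set, construct an inhomogeneous observable $g$ supported on the forward orbit $\{\sigma^n\omega\}_{n\ge0}$ that undoes the action of $L_T^n$ and keeps the correlation bounded away from zero. The paper's specific choice is $f=1_A-1_{X\setminus A}$ with $m(A)=\tfrac12$ and $g_{\sigma^n\omega}=L_T^n1_A=1_A\circ T^{-n}$, whereas you take an arbitrary nonzero $f\in L^1_0$ and $g_{\sigma^n\omega}=(\operatorname{sgn}f)\circ T^{-n}$; both rest on exactly the same use of the invertibility and $m$-invariance of $T$, and your treatment of the measurability of $g$ is in fact more explicit than the paper's.
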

\begin{proof}
We  first note that  the negation of  prior mixing for inhomogeneous  observables in $B(\Omega, L^\infty (X,m))$ is that
there is a measurable set $\Gamma \subset\Omega$ with $\mathbb{P}(\Gamma )>0$ such that for any $\omega\in \Gamma $, there exist $f=f^\omega$ in $L^1_0(X,m)$ and a bounded measurable map $g=g^\omega : \Omega \to L^\infty (X,m) : \tilde \omega \mapsto g^\omega _{\tilde \omega}$  satisfying 
$$
\lim_{n\to\infty}\int_X P_\omega^{(n)} f g_{\sigma^n\omega}dm
=\lim_{n\to\infty}\int_X P_\omega^{(n)} f^\omega g^\omega_{\sigma^n\omega}dm\neq 0.
$$
We emphasize that the observable $g=g^\omega$ may depend on $\omega\in \Gamma $. 

Let  $\Gamma=\Omega _0$ and fix $\omega \in \Gamma$.
Fix a measurable set $A$ with $m(A)= 1/2$.
Let $f=1_A- 1_{X\setminus A}$. 
Define $g =g^\omega : \Omega \to L^\infty (X,m)$  by
$$
g_{\tilde{\omega}}:=\begin{cases}
L_T^n 1_A & (\text{when $\tilde{\omega}=\sigma^n\omega$})\\
0 & {\rm (otherwise)}.
\end{cases}
$$
Then, by construction, $f\in L^1_0(X,m)$ and  $g : \Omega \to L^\infty (X,m)$ is a bounded and measurable map.
Furthermore, since $T$ is bijective,
for every $n\in \mathbb N$, 
\[
L^n_T1_A \cdot L_T^n 1_{X\backslash A} =0 \quad \text{$m$-almost everywhere}
\]
(note that 
$L^n_T1_B =1_{  T^n (B)}$ for any measurable set $B$).
Therefore, for every $n\in \mathbb N$
\begin{eqnarray}
\int_X P_\omega^{(n)} f g_{\sigma^n\omega}dm
&=&
\int_X L_T^n \left(1_A - 1_{X\setminus A}\right) L_T^n 1_Adm\nonumber\\
&=&\int_X \left( L_T ^n 1_A \right)^2dm =m(T^n(A)) =\frac{1}{2} >0.\nonumber
\end{eqnarray}
In conclusion, $(P,\sigma)$ is not prior  mixing for inhomogeneous observables in $B(\Omega, L^\infty$ $(X,m))$.
\end{proof}

\subsection{Skew-product transformations}

In this subsection, we show that our definitions of mixing for Markov operator cocycles naturally lead to the conventional mixing property for skew-product transformations.

Recall that $(X,\mathcal{A},m)$ and $(\Omega, \mathcal{F},\mathbb{P})$ are probability spaces, and $\sigma:\Omega\to\Omega$ is a $\mathbb{P}$-preserving transformation. 
We further assume that $\sigma$ is invertible and {\it mixing}. Let $(P,\sigma)$ be a Markov operator cocycle induced by the Perron-Frobenius operator corresponding to a non-singular transformation $T_\omega:X\to X$ for $\mathbb{P}$-almost every $\omega\in\Omega$. Assume that there is an invariant density map
$h : \Omega \to D(X,m)$ of $(P,\sigma )$
and define a measurable family of measures $\{\mu _\omega\}_{\omega \in \Omega}$ by $\mu_\omega(A)=\int_Ah_\omega dm$ for $A\in\mathcal{A}$, so that we have $(T_\omega )_*\mu _\omega = \mu _{\sigma \omega}$ due to \eqref{eq:0913b}.

Consider the skew-product transformation $\Theta:\Omega\times X \to \Omega\times X$ defined by $\Theta(\omega, x)=(\sigma\omega, T_\omega x)$ with the measure $\nu$ on $\Omega\times X$,
$$
\nu(A)=\int_{\Omega}\mu_\omega(A_\omega)d\mathbb{P}(\omega)\quad\text{for $A\in\mathcal{F}\otimes\mathcal{A}$},
$$
where $A_\omega:=\{x\in X: (\omega,x)\in A\}$ denotes the $\omega$-section.
Then, $(\Omega\times X, \mathcal{F}\otimes \mathcal{A},\nu)$ becomes a probability space, and $\nu$ is an invariant measure for $\Theta$, namely the Perron-Frobenius operator $L_\Theta$ corresponding to $\Theta$ with respect to $\nu$ satisfies $L_\Theta1_{\Omega\times X}=1_{\Omega\times X}$ $\nu$-almost everywhere.

\begin{theorem}\label{thm:0920}
If $(P,\sigma)$ is prior  mixing for inhomogeneous observables in $B(\Omega , L^\infty$ $(X,m))$, then $\Theta$ is mixing, that is, for any $A,B\in \mathcal{F}\otimes \mathcal{A}$,
\begin{eqnarray}\label{eq:12}
\lim_{n\to\infty}\nu(\Theta^{-n}A\cap B)=\nu(A)\nu(B).\label{mixingSP}
\end{eqnarray}
\end{theorem}

\begin{proof}

Let $1_{B_\omega}/\mu_\omega(B_\omega)\in D(X,\mu_\omega)$ so that $1_{B_\omega}h_\omega/\mu_\omega(B_\omega)\in D(X,m)$. Assuming that $(P,\sigma)$ is prior  mixing for inhomogeneous observables in $B(\Omega , L^\infty (X,m))$, we then know
\begin{eqnarray}
\lim_{n\to\infty}\int_X \left(P_\omega^{(n)}\left(\frac{1_{B_\omega}h_\omega}{\mu_\omega(B_\omega)}\right)-h_{\sigma^n\omega}\right) 1_{A_{\sigma^n\omega}} dm =0. \nonumber
\end{eqnarray}
Let $\widehat{P}_\omega:L^1(X,\mu_\omega)\to L^1(X,\mu_{\sigma\omega})$ be the normalized Markov operator defined by
$$
\widehat{P}_\omega f(x)=\begin{cases}
\frac{P_\omega(f h_\omega)(x)}{h_{\sigma\omega}(x)} & (x\in X^{\sigma\omega})\\
0 & ({\rm otherwise}).
\end{cases}
$$
where $X^\omega:={\rm supp}h_{\omega}$. Note that the relation $\widehat{P}_\omega 1_{X^\omega}=1_{X^{\sigma\omega}}$ holds for almost every $\omega\in\Omega$. Then we have,
$$
\lim_{n\to\infty}\left(\int_X \widehat{P}^{(n)}_\omega 1_{B_\omega}\cdot  1_{A_{\sigma^n\omega}} d\mu_{\sigma^n\omega}-\int_X 1_{A_{\sigma^n\omega}}d\mu_{\sigma^n\omega}\int_X 1_{B_\omega}d\mu_{\omega}\right)=0.\nonumber
$$
The first term can be calculated as
\begin{eqnarray}
\int_X \widehat{P}^{(n)}_\omega 1_{B_\omega}\cdot  1_{A_{\sigma^n\omega}} d\mu_{\sigma^n\omega}
&=&\int_X (\widehat{P}^{(n)}_\omega 1_{B_\omega})\cdot h_{\sigma^n\omega}\cdot  1_{A_{\sigma^n\omega}} dm\nonumber\\
&=&
\int_X P_\omega^{(n)} (1_{B_\omega}\cdot h_{\omega})\cdot  1_{A_{\sigma^n\omega}} dm\nonumber\\
&=&
\int_X 1_{B_\omega}\cdot h_\omega\cdot P^*_{\sigma\omega}\circ\cdots\circ P^*_{\sigma^{n-1}\omega} 1_{A_{\sigma^n\omega}} dm\nonumber\\
&=&
\int_X 1_{B_\omega}\cdot P^*_{\omega}\circ\cdots\circ P^*_{\sigma^{n-1}\omega} 1_{A_{\sigma^n\omega}} d\mu_{\omega}
\nonumber
\end{eqnarray}
Thus,
\begin{eqnarray}
\lim_{n\to\infty}\left(\int_X 1_{B_\omega}\cdot P^*_{\omega}\circ\cdots\circ P^*_{\sigma^{n-1}\omega} 1_{A_{\sigma^n\omega}} d\mu_{\omega}-\int_X 1_{A_{\sigma^n\omega}}d\mu_{\sigma^n\omega}\int_X 1_{B_\omega}d\mu_{\omega}\right)=0,\nonumber
\end{eqnarray}
where $P^*_\omega:L^\infty(X,m)\to L^\infty(X,m)$ is the Koopman operator with respect to $T_\omega$. Note that this implies the following natural mixing property for the random dynamical system $\{T_\omega\}_{\omega\in\Omega}$,
\begin{eqnarray}
\lim_{n\to\infty}\left(\mu_{\omega}\left(T^{(-n)}_{\omega} A_{\sigma^n\omega} \cap B_\omega\right)-\mu_{\sigma^n\omega}(A_{\sigma^n\omega})\mu_\omega(B_\omega)\right)=0,
\end{eqnarray}
where $T^{(-n)}_{\omega}=T^{-1}_{\omega}\circ\cdots\circ T^{-1}_{\sigma^{n-1}\omega}$.
Since $\mathbb{P}$ is a probability measure, by the Lebesgue dominated convergence theorem, 
\begin{eqnarray}
\lim_{n\to\infty}\left(\int_\Omega\mu_{\omega}\left(T^{(-n)}_{\omega}  A_{\sigma^n\omega} \cap B_\omega\right)d\mathbb{P}(\omega)-\int_\Omega\mu_{\sigma^n\omega}(A_{\sigma^n\omega})\mu_\omega(B_\omega)d\mathbb{P}(\omega)\right)=0.
\end{eqnarray}
By using $\displaystyle\Theta^{-n}(A)=\bigcup_{\omega\in\Omega}(\sigma^{-n}\omega, T^{-1}_{\sigma^{-n}\omega}\circ\cdots\circ T^{-1}_{\sigma^{-1}\omega} A_\omega)$ 
and $\displaystyle B=\bigcup_{\omega\in\Omega} (\omega,B_\omega)=\bigcup_{\omega\in\Omega} (\sigma^{-n}\omega, B_{\sigma^{-n}\omega})$, we have
\begin{eqnarray}
\nu(\Theta^{-n}A\cap B)&=&\int_{\Omega}\mu_{\sigma^{-n}\omega}(T^{-1}_{\sigma^{-n}\omega}\circ\cdots\circ T^{-1}_{\sigma^{-1}\omega} A_\omega \cap B_{\sigma^{-n}\omega})d\mathbb{P}(\omega)\nonumber\\
&=&\int_{\Omega}\mu_\omega \left(T^{(-n)}_{\omega} A_{\sigma^n\omega} \cap B_\omega\right)d\mathbb{P}(\omega)\nonumber
\end{eqnarray}

On the other hand, since $\sigma$ is mixing, invertible and $\mathbb{P}$-preserving,
\begin{eqnarray}
\lim_{n\to\infty}\int_{\Omega}\mu_\omega(A_\omega)d\mathbb{P} \int_{\Omega}\mu_\omega(B_\omega)d\mathbb{P}(\omega)
&=&
\lim_{n\to\infty}\int_\Omega  \mu_\omega(A_\omega)L_\sigma^n(\mu_\omega(B_\omega))d\mathbb{P}(\omega)\nonumber\\
&=&\int_{\Omega}\mu_\omega(A_\omega)d\mathbb{P} \int_{\Omega}\mu_\omega(B_\omega)d\mathbb{P}(\omega)\nonumber\\
&=&\nu(A)\nu(B)\nonumber
\end{eqnarray}
where $L_\sigma:L^1(\Omega,\mathbb{P})\to L^1(\Omega,\mathbb{P})$ is the Perron-Frobenius operator of $\sigma$.
Therefore we obtain 
$\nu(\Theta^{-n}A\cap B)\to\nu(A)\nu(B)$ as $n\to\infty$ for $A, B\in \mathcal{F}\otimes \mathcal{A}$.
\end{proof}

\begin{remark}\label{rm:0903c}
The converse of Theorem \ref{thm:0920} is not true in general due to the example given in Subsection \ref{ss:ce}. 
Indeed, let $\Theta$ be the direct product of two baker maps with $(\Omega \times X, \nu )  = ([0,1]^4, \mathrm{Leb} _{[0,1]^4})$. 
Then, the Markov operator cocycle $(P,\sigma )$ induced by $\Theta$ is not is prior  mixing for inhomogeneous observables in $B(\Omega , L^\infty (X,m))$ due to Theorem \ref{cthm}.
On the other hand, $\Theta$ is mixing because the backer map is mixing and the direct product of two same mixing systems is also mixing (cf.~\cite{W}). 
\end{remark}

\begin{remark}\label{rm:0903d}

In the case of prior mixing for homogeneous observables, as in the proof of Theorem \ref{thm:0920}, we can derive the  convergence
\begin{eqnarray}\label{eq:15}
\nu(\Theta^{-n}(F_1\times A_1)\cap (F_2\times A_2))\to\nu(F_1\times A_1)\nu(F_2\times A_2) \quad (n\to\infty)\label{mixingSP-2}
\end{eqnarray}
for any $F_1, F_2 \in\mathcal{F}$ and $A_1, A_2 \in \mathcal{A}$.
On the other hand,  \eqref{eq:15} implies the conventional mixing    of $\Theta$ 
 by a standard  approximation of measurable sets in $\mathcal F\otimes \mathcal A$ by finite union of direct product sets.\footnote{
Fix $A, B \in \mathcal F\otimes \mathcal A$ and $\epsilon >0$.
Then, one can find 
$\{F_{1}^{j} \}_{ j=1}^ J, \{ F_{2}^{j}\}_{ j=1}^ J\subset \mathcal F$ and $\{ A_{1}^{j}\}_{ j=1}^ J, \{ A_{2}^{j}\}_{ j=1}^ J\subset \mathcal A$  with $J\in \mathbb N$
such that both $ \{ A_1^j \times F_1^j \} _{j=1}^J$ and $ \{ A_2^j \times F_2^j \} _{j=1}^J$ are pairwise disjoint and 
both $\nu (A\setminus  (\cup _{j=1}^{J} A_1^j \times F_1^j)) $ and $\nu (A\setminus  (\cup _{j=1}^{J} A_2^j \times F_2^j)) $ are bounded by $\epsilon$.
By taking finer partition if necessary, one can assume that $\{F_{1}^{j} \}_{ j=1}^ J$  is also  pairwise disjoint. 
Since $\theta $ is invertible, $\{\Theta ^{-n} (F_{1}^{j} \times A_1^j) \}_{ j=1}^ J$ is again pairwise disjoint for each $n\geq 0$. 
Hence, 
$
\nu (\Theta ^{-n} A\cap B)-\nu (A)\nu (B) 
$ is $\epsilon$-close to
\[
\sum _{j=1}^J\left(\nu (\Theta ^{-n} (F_{1}^{j} \times A_1^j)\cap (F_{2}^{j} \times A_2^j))-\nu (F_{1}^{j} \times A_1^j)\nu (F_{2}^{j} \times A_2^j) \right),
\]
whose absolute value is smaller than $\epsilon$ for any sufficiently large $n$ by \eqref{eq:15}.
Since $\epsilon$ is arbitrary, this implies \eqref{eq:12}, that is, the mixing of $\Theta$.
}
Furthermore, recall   that  $\Theta$ is mixing if and only if for any $\tilde f \in L^1(\Omega \times X, \nu )$ and $\tilde g \in L^\infty (\Omega \times X, \nu )$
\begin{equation}\label{eq:15b}
  \int _{\Omega \times X} \tilde f  \tilde g \circ \Theta ^nd\nu \to \int _{\Omega \times X} \tilde f  d\nu   \int _{\Omega \times X} \tilde g d\nu  \quad (n\to\infty ).
\end{equation}
Therefore, 
if $\Theta$ is mixing, then for any $f \in L^1 _0(X, m)$, $g\in L^\infty (X,m)$ and $\rho \in L^\infty (\Omega , \mathbb P)$, by applying \eqref{eq:15b} to 
\[
\tilde f(\omega , x) = 
\begin{cases}
\frac{f(x)}{h_\omega (x)} \quad &(x\in \mathrm{supp}(h_\omega ))\\
0 \quad & (x\not\in \mathrm{supp}(h_\omega ))
\end{cases} \quad \text{and}
 \quad \tilde g(\omega ,x) =\rho (\omega )g(x), 
 \]
 it follows from \eqref{eq:0913} and the invariance of $\mathbb P$ for $\theta$ that 
\[
 \int_\Omega \rho (\omega ) \left(\int _{X} P_{\sigma ^{-n}\omega }^{(n)} f g dm \right) d\mathbb P(\omega ) 
   \to 0
     \quad (n\to\infty ).
\]
Since $\rho$ is arbitrary, this immediately implies the prior mixing of $(P,\sigma )$.

In conclusion, the prior mixing of $(P,\sigma )$ for homogeneous observables, \eqref{eq:15} and the mixing of $\Theta$ are equivalent, and thus,  
due to the relationship summarized in  Figure \ref{fig:FC}, 
the prior/posterior mixing for (in)homogeneous observables
in every   class considered in this paper  implies the conventional mixing of $\Theta$.  
\end{remark}

By Theorems \ref{Thm:mixing} and \ref{thm:0917b}  together with Remark \ref{rm:0903}, 
 prior mixing for homogeneous observables is equivalent to  posterior mixing for inhomogeneous observables in $L^\infty(\Omega , L^\infty (X,m))$, which is  equivalent to  posterior mixing for inhomogeneous observables in $B(\Omega , L^\infty (X,m))$ by definition (refer to Remark \ref{rm:0903b}).
Therefore, by Remarks  \ref{rm:0903c} and \ref{rm:0903d} we obtain the following corollary.
\begin{corollary}\label{cor:0903}
Let $(P,\sigma )$ be the Markov operator cocycle given in Remark \ref{rm:0903c}.
Then $(P,\sigma )$ is posterior mixing for inhomogeneous observables in $B(\Omega , L^\infty (X,m))$ but  not prior mixing for inhomogeneous observables in $B(\Omega , L^\infty (X,m))$. 
\end{corollary}

\begin{remark}
As we will see in Section \ref{s:asymp}, when $(P, \sigma )$ is an \emph{asymptotically periodic}  Markov operator cocycle, then the posterior mixing for inhomogeneous observables in $B(\Omega , L^\infty (X,m))$ is equivalent to the prior mixing for inhomogeneous observables in $B(\Omega , L^\infty (X,m))$.
This is contrastive to Corollary \ref{cor:0903}. 
On the other hand, the backer map, being the fiber dynamics of the counterexample in Corollary \ref{cor:0903}, is a well-known example whose Perron-Frobenius  operator is not  asymptotically periodic. 
\end{remark}

\subsection{Problems}
We finally propose a related problem.
Our definitions of mixing were given in terms of the decay of the correlation between $P^{(n)}_\omega f$ and $g$ (or $g_{\sigma^n\omega}$), and it is of great importance to evaluate the \emph{rate} of decay, as seen in the previous works \cites{BBR,BKS,DFGV,Gundlach1996}. Thus, we pose the following problem:
\begin{problem}
Investigate the relationship between decay rates of correlations for each type of mixing in Definition \ref{eme}.
\end{problem}

All results introduced in Remark \ref{B} established not only mixing property but also exponential mixing (for expanding or hyperbolic maps).
As mentioned there,
these results include both prior and posterior  mixing for both homogeneous and inhomogeneous observables.
We also remark that Froyland et al.~recently developed a multiplicative ergodic theorem for semi-invertible operator cocycles in \cites{FLQ,GQ}, which
 enabled one to consider the ``quasi-compactness'' of transfer  operator cocycles in terms of Lyapunov exponents and played the key role in the establishment of   exponential mixing (and its consequences such as several limit theorems) for random expanding  or hyperbolic dynamical systems  in \cites{DFGV,DFGV2}.

\section{Exactness}\label{s:exact}

As a characterization of exactness which is well-known for one non-singular transformation (see \cite{Aa}),
we have the generalization of Lin's theorem \cite{L} as follows. For each $\omega\in\Omega$, $P_{\omega}^*$ denotes the adjoint operator of $P_{\omega}$   defined by
\begin{align*}
\int_XP_{\omega}fgdm = \int_XfP_{\omega}^*gdm
\end{align*}
for $f\in L^1(X,m)$ and $g\in L^{\infty}(X,m)$, and we will use the notation
\begin{align*}
P_{\omega}^{(n)*} = P_{\omega}^*\circ P_{\sigma\omega}^*\circ\dots\circ P_{\sigma^{n-1}\omega}^*
\end{align*}
for $\omega\in\Omega$ and $n\ge 1$.

\begin{theorem}\label{exactness}
Let $(P,\sigma)$ be a Markov operator cocycle and $S=\{g\in L^{\infty}(X,m) : \left\Vert g \right\Vert_{L^{\infty}}\le 1\}$ the unit ball in $L^{\infty}(X,m)$.
Then the following are equivalent for each $\omega\in\Omega$.
 \begin{enumerate}
 \item $f\in L^1(X,m)$ satisfies $\left\lVert P_{\omega}^{(n)}f \right\rVert_{L^1(X)} \to 0$ as $n\to\infty;$
 \item $f\in L^1(X,m)$ satisfies $\int_X f g dm = 0$ for any $g\in\bigcap_{n\ge1} P_{\omega}^{(n)*}S$.
 \end{enumerate}
Consequently, 
$(P,\sigma)$ is exact
if and only if
$\bigcap_{n\ge1} P_{\omega}^{(n)*}S=\{c 1_X : c\in \mathbb{R}\}$ for $\mathbb{P}$-almost every $\omega\in\Omega$.
\end{theorem}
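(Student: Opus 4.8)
The plan is to transcribe Lin's duality argument to the cocycle setting. For $\omega$ such that $P_{\sigma^k\omega}$ is a Markov operator for every $k\ge 0$ (a set of full $\mathbb{P}$-measure), write $P^{*(n)}_\omega:=P^{\ast}_\omega\circ P^{\ast}_{\sigma\omega}\circ\cdots\circ P^{\ast}_{\sigma^{n-1}\omega}$ for the adjoint cocycle on $L^\infty(X,m)$, so that $P^{*(n)}_\omega=(P^{(n)}_\omega)^{\ast}$ and $\int_X P^{(n)}_\omega f\cdot g\,dm=\int_X f\cdot P^{*(n)}_\omega g\,dm$. Two elementary facts should be recorded first. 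Since each $P^{\ast}_{\sigma^k\omega}$ is positive and fixes $1_X$ (because $P_{\sigma^k\omega}$ preserves $\int_X\cdot\,dm$), it maps $S$ into $S$; hence $(P^{*(n)}_\omega S)_{n\ge1}$ is a \emph{decreasing} sequence of subsets of $S$ with $1_X\in\bigcap_{n\ge1}P^{*(n)}_\omega S$. Since each $P_{\sigma^k\omega}$ is an $L^1$-contraction, $n\mapsto\Vert P^{(n)}_\omega f\Vert_{L^1(X)}$ is non-increasing, so $\ell:=\lim_{n\to\infty}\Vert P^{(n)}_\omega f\Vert_{L^1(X)}$ exists.

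The implication $(1)\Rightarrow(2)$ is then immediate: if $g\in\bigcap_{n\ge1}P^{*(n)}_\omega S$, write $g=P^{*(n)}_\omega g_n$ with $g_n\in S$ and use the adjoint relation to get $|\int_X fg\,dm|=|\int_X P^{(n)}_\omega f\cdot g_n\,dm|\le\Vert P^{(n)}_\omega f\Vert_{L^1(X)}\to0$, so $\int_X fg\,dm=0$. For $(2)\Rightarrow(1)$ I would argue by contraposition, assuming $\ell>0$. For each $n$ choose a measurable $g_n$ with $|g_n|\le1$ and $P^{(n)}_\omega f\cdot g_n=|P^{(n)}_\omega f|$, and put $h_n:=P^{*(n)}_\omega g_n\in P^{*(n)}_\omega S$, so that $\int_X fh_n\,dm=\Vert P^{(n)}_\omega f\Vert_{L^1(X)}$. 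By the Banach--Alaoglu theorem, $S$ is weak$^{\ast}$-compact, so $(h_n)$ has a weak$^{\ast}$-convergent subnet $h_{n_\alpha}\to h$. Fix $m\ge1$: eventually $n_\alpha\ge m$, and then $h_{n_\alpha}\in P^{*(n_\alpha)}_\omega S\subseteq P^{*(m)}_\omega S$ by the monotonicity above; since $P^{(m)}_\omega$ is bounded on $L^1(X,m)$, its adjoint $P^{*(m)}_\omega$ is weak$^{\ast}$-to-weak$^{\ast}$ continuous, so $P^{*(m)}_\omega S$ is weak$^{\ast}$-compact, hence weak$^{\ast}$-closed, whence $h\in P^{*(m)}_\omega S$. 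As $m$ was arbitrary, $h\in\bigcap_{m\ge1}P^{*(m)}_\omega S$. On the other hand, weak$^{\ast}$ convergence and $f\in L^1(X,m)$ give $\int_X fh\,dm=\lim_\alpha\int_X fh_{n_\alpha}\,dm=\lim_\alpha\Vert P^{(n_\alpha)}_\omega f\Vert_{L^1(X)}=\ell>0$, contradicting $(2)$; hence $\ell=0$, which is $(1)$.

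The characterisation of exactness then follows formally. By Definition \ref{d:32}, $(P,\sigma)$ is exact precisely when, for $\mathbb{P}$-almost every $\omega$, $\Vert P^{(n)}_\omega f\Vert_{L^1(X)}\to0$ for every $f\in L^1_0(X,m)$; by the equivalence just proved this holds iff, for $\mathbb{P}$-almost every $\omega$, every $g\in\bigcap_{n\ge1}P^{*(n)}_\omega S$ annihilates $L^1_0(X,m)$. Since $L^1_0(X,m)$ is the closed kernel of the bounded functional $f\mapsto\int_X f\,dm$ on $L^1(X,m)$ and $L^\infty(X,m)=L^1(X,m)^{\ast}$, its annihilator in $L^\infty(X,m)$ is the line $\mathbb{R}1_X$; together with the trivial inclusion $\{c1_X:|c|\le1\}\subseteq\bigcap_{n\ge1}P^{*(n)}_\omega S$ this yields $\bigcap_{n\ge1}P^{*(n)}_\omega S=\{c1_X:c\in\mathbb{R}\}$ for $\mathbb{P}$-almost every $\omega$. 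The only non-routine point is the weak$^{\ast}$-compactness step in $(2)\Rightarrow(1)$: recognising the subnet limit $h$ as an element of the whole intersection rests on the monotonicity of $(P^{*(n)}_\omega S)_n$ and on each $P^{*(m)}_\omega S$ being weak$^{\ast}$-closed (which is where one uses that $P^{*(m)}_\omega$ is an adjoint operator), while passing to a net rather than a subsequence avoids assuming $L^1(X,m)$ separable; everything else is a direct duality computation.
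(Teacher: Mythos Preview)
Your proof is correct and follows essentially the same approach as the paper's: both use the duality relation $\int_X P^{(n)}_\omega f\cdot g_n\,dm=\int_X f\cdot P^{*(n)}_\omega g_n\,dm$ with $g_n=\operatorname{sgn}(P^{(n)}_\omega f)$, apply Banach--Alaoglu to obtain a weak$^\ast$ accumulation point in $\bigcap_n P^{*(n)}_\omega S$, and exploit the monotonicity of $\Vert P^{(n)}_\omega f\Vert_{L^1(X)}$ to upgrade subsequential (or subnet) convergence to full convergence. Your explicit use of subnets rather than subsequences is a minor technical refinement that avoids any implicit appeal to separability of $L^1(X,m)$, and your annihilator argument for the ``consequently'' statement spells out what the paper leaves to the reader.
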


\begin{proof}
First of all, notice that for any $\omega\in\Omega$, $P_{\omega}^{*}S\subset S$ and $P_{\omega}^{(n)*}=P_{\omega}^*\circ P_{\sigma\omega}^*\circ\dots\circ P_{\sigma^{n-1}\omega}^*$ enable us to have the decreasing sequence in $L^{\infty}(X,m)$:
 \begin{align*}
 S \supset P_{\omega}^{*}S \supset P_{\omega}^{(2)*}S \supset \dots \supset \bigcap_{n\ge1} P_{\omega}^{(n)*}S.
 \end{align*}
 
Now we assume (1) is true.
Then for each $g\in\bigcap_{n\ge1} P_{\omega}^{(n)*}S$, there is a sequence $\{g_n\}_n\subset S$ so that $P_{\omega}^{(n)*}g_n=g$ and for $f$ in the condition (1),
 \begin{align*}
 \int_X f g dm = \int_X f P_{\omega}^{(n)*} g_ndm = \int_X P_{\omega}^{(n)} f g_ndm \le \left\lVert P_{\omega}^{(n)}f \right\rVert_{L^1(X)}\to 0
 \end{align*}
as $n\to\infty$.
Thus (2) is valid.

Next, suppose that  (2) holds.
By the Banach-Alaoglu theorem and continuity of $P_{\omega}^*$ on the weak-* topology in $L^{\infty}(X,m)$, $S$ is compact in weak-* and so is $P_{\omega}^{(n)*}S$.
For $f$ in the condition (2), taking $g_n=\operatorname*{sgn}\left(P_{\omega}^{(n)}f\right)\in S$ where $\operatorname*{sgn}(\phi)=1$ on $\{\phi\ge0\}$ and $\operatorname*{sgn}(\phi)=-1$ otherwise, we have
 \begin{align*}
 \left\lVert P_{\omega}^{(n)}f \right\rVert_{L^1(X)} = \int_X P_{\omega}^{(n)}f g_n dm = \int_X f P_{\omega}^{(n)*}g_n dm.
 \end{align*}
Let $g$ be an accumulation point of $\left\{P_{\omega}^{(n)*}g_n\right\}_n$ which belongs to $\bigcap_{n\ge1} P_{\omega}^{(n)*}S$.
Then we have $\int_X fgdm=0$ by assumption (2) and for some subsequence $\{n_i\}_i\subset\mathbb{N}$, we have
 \begin{align*}
 \lim_{i\to\infty} \left\lVert P_{\omega}^{(n_i)}f \right\rVert_{L^1(X)} = \lim_{i\to\infty} \int_X f P_{\omega}^{(n_i)*}g_{n_i} dm = \int_X fg dm = 0.
 \end{align*}
Since $P_{\omega}$ is Markov, $\left\lVert P_{\omega}^{(n)}f \right\rVert_{L^1(X)} \le \left\lVert P_{\omega}^{(n_i)}f \right\rVert_{L^1(X)}$ for $n\ge n_i$.
Therefore we have the condition (1).

Finally, considering the case when $f\in L^1_0(X,m)$, we have the equivalent condition for exactness of $(P,\sigma)$ and the proof is completed.
\end{proof}

As an immediate corollary of Theorem \ref{exactness}, we have:

\begin{corollary}\label{cor:32}
If a Markov operator cocycle $(P,\sigma)$ is derived from non-singular transformations $T_{\omega}$, that is, each $P_{\omega}$ is the Perron-Frobenius operator associated to $T_{\omega}$.
Then  $(P,\omega)$ is exact if and only if for $\mathbb{P}$-almost every $\omega\in\Omega$,
 \begin{align*}
 \bigcap_{n\ge1}\left( T_{\omega}^{(n)} \right)^{-1} \mathcal{A} = \{\emptyset,X\} \pmod{m}.
 \end{align*}
\end{corollary}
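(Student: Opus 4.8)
The plan is to deduce Corollary \ref{cor:32} directly from Theorem \ref{exactness} by translating the condition $\bigcap_{n\ge1} P_\omega^{*(n)}S = \{c1_X : c\in\mathbb R\}$ into a statement about $\sigma$-algebras, using the concrete description of the adjoint of a Perron--Frobenius operator. First I would record that when $P_\omega = L_{T_\omega}$, the dual operator $P_\omega^* : L^\infty(X,m)\to L^\infty(X,m)$ is precisely the Koopman operator $g\mapsto g\circ T_\omega$, by \eqref{eq:0913}; hence $P_\omega^{*(n)} g = g\circ T_\omega^{(n)} = g\circ T_\omega^n$ in the notation of the corollary. Consequently $P_\omega^{*(n)}S = \{\, g\circ T_\omega^n : g\in L^\infty(X,m),\ \|g\|_{L^\infty}\le 1 \,\}$, which is exactly the unit ball of $L^\infty\bigl(X, (T_\omega^n)^{-1}\mathcal A, m\bigr)$ — that is, the set of (equivalence classes of) functions bounded by $1$ that are measurable with respect to the pulled-back $\sigma$-algebra $(T_\omega^n)^{-1}\mathcal A$ (mod $m$).

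The key step is then the measure-theoretic identity: the intersection $\bigcap_{n\ge1} P_\omega^{*(n)}S$ equals the unit ball of $L^\infty\bigl(X, \mathcal A_\omega^\infty, m\bigr)$, where $\mathcal A_\omega^\infty := \bigcap_{n\ge1}(T_\omega^n)^{-1}\mathcal A$ is the tail $\sigma$-algebra. The inclusion $\supseteq$ is immediate since a function measurable w.r.t.\ every $(T_\omega^n)^{-1}\mathcal A$ lies in each $P_\omega^{*(n)}S$. For $\subseteq$, if $g$ lies in all $P_\omega^{*(n)}S$, then for each $n$ there is $g_n\in S$ with $g = g_n\circ T_\omega^n$, so $g$ is ($m$-a.e.\ equal to a function) measurable w.r.t.\ $(T_\omega^n)^{-1}\mathcal A$; since this holds for all $n$, $g$ is measurable w.r.t.\ $\mathcal A_\omega^\infty$ up to an $m$-null set. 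With this identification in hand, $\bigcap_{n\ge1}P_\omega^{*(n)}S = \{c1_X : c\in\mathbb R\}$ holds if and only if every $\mathcal A_\omega^\infty$-measurable function is $m$-a.e.\ constant, which is in turn equivalent to $\mathcal A_\omega^\infty = \{\emptyset, X\} \pmod m$. Feeding this equivalence, valid for each fixed $\omega$, into the ``consequently'' clause of Theorem \ref{exactness} (exactness $\iff$ the intersection is trivial for $\mathbb P$-a.e.\ $\omega$) yields exactly the statement of the corollary.

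I expect the main (minor) obstacle to be handling the mod-$m$ subtleties cleanly: an element of $P_\omega^{*(n)}S$ is an $L^\infty$-equivalence class, and to say it is ``$(T_\omega^n)^{-1}\mathcal A$-measurable'' one must pick a genuine representative of the form $g_n\circ T_\omega^n$ and then argue that the tail property passes to a single representative measurable w.r.t.\ $\mathcal A_\omega^\infty$ — this uses that $m$ is a probability measure and a routine completion/approximation argument, but it should be stated rather than left implicit. A secondary point worth a sentence is that the $\sigma$-algebras $(T_\omega^n)^{-1}\mathcal A$ form a \emph{decreasing} sequence (because $T_\omega^{n+1} = T_{\sigma^n\omega}\circ T_\omega^n$ implies $(T_\omega^{n+1})^{-1}\mathcal A = (T_\omega^n)^{-1}\bigl((T_{\sigma^n\omega})^{-1}\mathcal A\bigr) \subseteq (T_\omega^n)^{-1}\mathcal A$), which matches — and explains — the decreasing chain of balls $P_\omega^{*(n)}S$ already noted at the start of the proof of Theorem \ref{exactness}. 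Everything else is formal.
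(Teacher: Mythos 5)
Your proposal is correct and follows essentially the same route as the paper: identify $P_\omega^{*(n)}$ with the Koopman operator $g\mapsto g\circ T_\omega^{n}$, translate the condition $\bigcap_{n\ge1}P_\omega^{*(n)}S=\{c1_X:c\in\mathbb R\}$ into triviality of the tail $\sigma$-algebra $\bigcap_{n\ge1}(T_\omega^{n})^{-1}\mathcal A$, and invoke the final clause of Theorem \ref{exactness}. The only difference is that you spell out the mod-$m$ bookkeeping (which the paper's two-sentence proof leaves implicit); the ``routine'' step you flag is indeed routine here, since for a \emph{decreasing} sequence of $\sigma$-algebras one can replace the level sets $\{f>c\}$ by $\limsup_n (T_\omega^{n})^{-1}\{g_n>c\}$, which lies in every $(T_\omega^{k})^{-1}\mathcal A$ and differs from $\{f>c\}$ by an $m$-null set.
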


\begin{proof}
Since $P_{\omega}^*$ is the Koopman operator of $T_{\omega}$, characteristic functions are mapped to characteristic functions.
Thus we can consider $P_{\omega}^{(n)*}$ on $\{g\in L^{\infty}(X,m) : \lVert g\rVert_{L^{\infty}(X)}=1\}$ and we prove the corollary. 
\end{proof}

\section{Asymptotic periodicity}\label{s:asymp}

In the arguments of conventional Markov operators, it is known that mixing and exactness are equivalent properties in the asymptotically periodic class \cite{LM}. In this section, we consider a similar result to the conventional one for our definitions of mixing and exactness for Markov operator cocycles under the following definition of asymptotic periodicity, which is studied in \cite{NN}.
Moreover, in the sequel of the section, we introduce the relation between the asymptotic periodicity and exactness from the viewpoint of the existence of an invariant density.

\begin{definition}[Asymptotic periodicity]\label{ap}
 A Markov operator cocycle $(P, \sigma )$ is said to be {\it asymptotically periodic}  
 if there exist an integer $r$,  finite collections $\{ \lambda_i\} _{i=1}^r \subset B \left(\Omega , (L^1(X,m))^{\prime}\right)$ and 
 $\{ \varphi_i\} _{i=1}^r \subset B \left(\Omega , D(X,m)\right)$ satisfying that $\{\varphi_i ^\omega \}_{i=1}^r$ have mutually disjoint supports for $\mathbb{P}$-almost every $\omega\in\Omega$, and there exists a permutation $\rho_\omega$ of $\{1,\dots,r\}$ such that 
 \begin{align}\label{asydeco}
 P_\omega \varphi_i^{\omega}=\varphi_{\rho_\omega(i)}^{\sigma\omega} \quad \text{and} \quad \lim_{n\to\infty}\left\lVert P_\omega^{(n)}\left( f-\sum_{i=1}^r\lambda_i^\omega(f)\varphi_{i} ^{\omega }\right) \right\rVert_{L^1(X)}=0 
\end{align}
for every $ f\in L^1(X, m)$, $1\leq i\leq r$ and $\mathbb P$-almost every $\omega \in \Omega$, 
where $\lambda _i^\omega =\lambda _i(\omega )$, $\varphi_i^\omega =\varphi_i(\omega)$.

Furthermore, if in addition $r=1$, then $(P, \sigma )$ is said to be {\it asymptotically stable}.  
\end{definition}

Note that when $(P,\sigma)$ is asymptotically periodic,
\begin{align*}
h_{\omega}=\frac{1}{r}\sum_{i=1}^r\varphi_i^{\omega}
\end{align*}
becomes an invariant density for $(P,\sigma)$.

For an asymptotically periodic single Markov operator, exactness and mixing coincide with $r=1$ for the representation of asymptotic periodicity (see Theorem 5.5.2 and 5.5.3 in \cite{LM}).
The following theorem and proposition are Markov operator cocycles version of them.

\begin{theorem}\label{X}
Let $(P,\sigma)$ be an asymptotically periodic Markov operator cocycle.
Then the followings are equivalent.
 \begin{enumerate}
 \item $(P,\sigma)$ is exact;
 \item $(P,\sigma)$ is prior mixing for inhomogeneous observables in $B(\Omega,L^\infty(X))$;
 \item $(P,\sigma)$ is posterior mixing for inhomogeneous observables in $B(\Omega,L^\infty(X))$;
 \item $(P,\sigma)$ is asymptotically stable.
 \end{enumerate}
\end{theorem}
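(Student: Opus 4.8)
The plan is to prove the chain of implications $(4)\Rightarrow(1)\Rightarrow(2)\Rightarrow(3)\Rightarrow(4)$, the first three being short and the last one carrying all the weight. Fix once and for all a representation of the asymptotic periodicity as in Definition \ref{ap}: an integer $r$, maps $\lambda_i\in B(\Omega,(L^1(X,m))')$ and $g_i\in B(\Omega,D(X,m))$, mutually disjoint supports $\operatorname*{supp}g_i^\omega$ for $\mathbb P$-a.e.\ $\omega$, and measurable permutations $\rho_\omega$ of $\{1,\dots,r\}$ with $P_\omega g_i^\omega=g_{\rho_\omega(i)}^{\sigma\omega}$. I would first record three elementary facts, valid for $\mathbb P$-a.e.\ $\omega$ and all $n$: (a) $P_\omega^{(n)}g_i^\omega=g_{\rho_\omega^n(i)}^{\sigma^n\omega}$; (b) $\lambda_i^\omega(g_j^\omega)=\delta_{ij}$, because the error $P_\omega^{(n)}\big(g_j^\omega-\sum_i\lambda_i^\omega(g_j^\omega)g_i^\omega\big)$ is a linear combination of the functions $g_k^{\sigma^n\omega}$, which have mutually disjoint supports and unit $L^1$-norm, so its $L^1$-norm does not depend on $n$; since it tends to $0$ by \eqref{asydeco}, it must vanish; (c) by the same disjointness argument, $\Vert P_\omega^{(n)}f\Vert_{L^1(X)}\to\sum_{i=1}^r|\lambda_i^\omega(f)|$ for every $f\in L^1(X,m)$.

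For $(4)\Rightarrow(1)$: when $r=1$, integrating the relation in \eqref{asydeco} and using that $P_\omega$ preserves the integral gives $\lambda_1^\omega(f)=\int_X f\,dm$, so $\Vert P_\omega^{(n)}f\Vert_{L^1(X)}\to 0$ whenever $f\in L^1_0(X,m)$, which is exactness. For $(1)\Rightarrow(2)$: if $g\in B(\Omega,L^\infty(X,m))$, then $\big|\int_X P_\omega^{(n)}f\,g_{\sigma^n\omega}\,dm\big|\le\Vert P_\omega^{(n)}f\Vert_{L^1(X)}\sup_{\tilde\omega}\Vert g_{\tilde\omega}\Vert_{L^\infty(X)}\to 0$ by exactness and boundedness of $g$. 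The implication $(2)\Rightarrow(3)$ is immediate from the definitions.

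The core is $(3)\Rightarrow(4)$, which I would prove by contraposition: assuming $r\ge 2$, I will produce a fixed $f\in L^1_0(X,m)$ and an observable $g\in B(\Omega,L^\infty(X,m))$ for which posterior mixing fails on a set of positive $\mathbb P$-measure. \textbf{Step 1 (choice of $f$).} If $\lambda_i^\omega(f)=0$ held for all $i$ and $\mathbb P$-a.e.\ $\omega$ for \emph{every} $f\in L^1_0(X,m)$, then (applying this to a countable dense subset of $L^1_0(X,m)$ and using continuity of the $\lambda_i^\omega$) each $\lambda_i^\omega$ would annihilate $L^1_0(X,m)$, i.e.\ $\lambda_i^\omega(f)=c_i^\omega\int_X f\,dm$ for a scalar $c_i^\omega$; but then $\delta_{ij}=\lambda_i^\omega(g_j^\omega)=c_i^\omega$ for every $j$, which is impossible when $r\ge 2$. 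Hence there are a fixed $f\in L^1_0(X,m)$, an index $a$, and a measurable set $G$ with $\mathbb P(G)>0$ such that $\lambda_a^\omega(f)\neq 0$ for every $\omega\in G$. \textbf{Step 2 (recurrence of the permutation cocycle).} The skew product $\widehat\Theta(\omega,\pi)=(\sigma\omega,\rho_\omega\circ\pi)$ on the product of $\Omega$ with the (finite) group of permutations of $\{1,\dots,r\}$ preserves the product of $\mathbb P$ with the uniform measure, so applying the Poincar\'e recurrence theorem to $\Omega\times\{\mathrm{id}\}$ shows that for $\mathbb P$-a.e.\ $\omega$ there are infinitely many $n$ with $\rho_\omega^n=\mathrm{id}$. \textbf{Step 3 (the observable).} Set $g_{\tilde\omega}:=1_{\operatorname*{supp}g_a^{\tilde\omega}}$; this is bounded by $1$ and jointly measurable, hence lies in $B(\Omega,L^\infty(X,m))$. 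For the $n$ with $\rho_\omega^n=\mathrm{id}$ we have, by (a) and \eqref{asydeco}, $P_\omega^{(n)}f=\sum_i\lambda_i^\omega(f)g_i^{\sigma^n\omega}+\varepsilon_n$ with $\Vert\varepsilon_n\Vert_{L^1(X)}\to 0$, and since the supports of the $g_i^{\sigma^n\omega}$ are disjoint and $\int_X g_i^{\tilde\omega}\,dm=1$,
\[
\int_X P_\omega^{(n)}f\,g_{\sigma^n\omega}\,dm=\sum_{i=1}^r\lambda_i^\omega(f)\,\delta_{ia}+\int_X\varepsilon_n\,g_{\sigma^n\omega}\,dm\ \longrightarrow\ \lambda_a^\omega(f)\neq 0
\]
for every $\omega\in G$ outside a $\mathbb P$-null set. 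Hence $\int_X P_\omega^{(n)}f\,g_{\sigma^n\omega}\,dm$ does not converge to $0$ on a set of positive $\mathbb P$-measure, contradicting $(3)$; so $r=1$, i.e.\ $(P,\sigma)$ is asymptotically stable.

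The main obstacle is Steps 2--3 of $(3)\Rightarrow(4)$: the permutations $\rho_\omega^n$ move the ``macroscopic component'' $\sum_i\lambda_i^\omega(f)g_i^{\sigma^n\omega}$ of $P_\omega^{(n)}f$ around the index set, so no single fixed support set isolates $\lambda_a^\omega(f)$; it is the combination of Poincar\'e recurrence (which returns $\rho_\omega^n$ to the identity along a subsequence, for $\mathbb P$-a.e.\ \emph{individual} $\omega$) with the freedom, in the inhomogeneous setting, to let $g$ depend on the environment that makes the argument go through. One should also verify the joint measurability of $\tilde\omega\mapsto 1_{\operatorname*{supp}g_a^{\tilde\omega}}$ and that the finitely many exceptional null sets --- from \eqref{asydeco}, from recurrence, and from $\sigma$-invariance of the full-measure sets on which the disjointness of $\{\operatorname*{supp}g_i^{\sigma^n\omega}\}_i$ holds for every $n$ --- combine to a $\mathbb P$-null set, but these points are routine.
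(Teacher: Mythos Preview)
Your proof is correct, and the implications $(4)\Rightarrow(1)\Rightarrow(2)\Rightarrow(3)$ match the paper's. The substantial difference is in $(3)\Rightarrow(4)$.

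The paper's argument here is much shorter: it takes the $\omega$-dependent test function $f_\omega=h_\omega-g_1^\omega\in L^1_0(X,m)$ together with the observable $\varphi_\omega=1_{\operatorname*{supp}g_1^\omega}$ and computes
\[
\int_X P_\omega^{(n)}(h_\omega-g_1^\omega)\,\varphi_{\sigma^n\omega}\,dm=\tfrac{1}{r}-\int_{\operatorname*{supp}g_1^{\sigma^n\omega}}g_{\rho_\omega^n(1)}^{\sigma^n\omega}\,dm\in\bigl\{\tfrac{1}{r},\tfrac{1}{r}-1\bigr\}
\]
for \emph{every} $n$, so no recurrence is needed. This relies on the equivalence sketched in Remark~\ref{B}, which allows the density $f$ in the mixing definitions to depend measurably on $\omega$; without that extension the paper's choice of $f$ would not literally contradict posterior mixing as stated in Definition~\ref{eme}.

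Your route avoids this appeal to Remark~\ref{B} by producing a genuinely $\omega$-independent $f\in L^1_0(X,m)$ (Step~1), at the cost of introducing the permutation-group skew product $\widehat\Theta(\omega,\pi)=(\sigma\omega,\rho_\omega\circ\pi)$ and Poincar\'e recurrence (Step~2) to manufacture a subsequence along which $\rho_\omega^n=\mathrm{id}$ and the limit can be read off. The auxiliary facts (a)--(c) you record are correct; in particular your proof of $\lambda_i^\omega(g_j^\omega)=\delta_{ij}$ via the $n$-independence of $\sum_i|\delta_{ij}-\lambda_i^\omega(g_j^\omega)|$ is clean. So your argument is longer but more self-contained relative to the literal Definition~\ref{eme}, while the paper's is shorter but leans on the extension to $\omega$-dependent densities. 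The measurability of $\omega\mapsto\rho_\omega$ (needed for $\widehat\Theta$) is not explicit in Definition~\ref{ap}, but follows from the measurability of $P$ and of the $g_i$ together with the disjointness of supports, so this is indeed routine.
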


\begin{proof}
(1) $\Rightarrow$ (2) $\Rightarrow$ (3):
Obvious.

(3) $\Rightarrow$ (4):
Suppose $(P,\sigma)$ is posterior mixing for inhomogeneous observables in $B(\Omega,L^\infty(X,m))$ and $r>1$ (recall that $r$ is the period of the  asymptotically periodic Markov operator cocycle $(P,\sigma)$ given in Definition \ref{ap}).
By asymptotic periodicity of $(P,\sigma)$, we have an invariant density $h_{\omega}=\frac{1}{r}\sum_{i=1}^r\varphi_i^{\omega}$.
Set $g_{\omega}=1_{\operatorname*{supp}\varphi_1^{\omega}}$ and write
 \begin{align*}
 \int_X P_{\omega}^{(n)} \left(h_{\omega}-\varphi_1^{\omega}\right) \cdot g_{\sigma^n\omega} dm
 &= \int_{\operatorname*{supp}\varphi_1^{\sigma^n\omega}} \left(h_{\sigma^n\omega} - \varphi_{\rho_\omega^n(1)}^{\sigma^n\omega}\right) dm\\
 &= \frac{1}{r} - \int_{\operatorname*{supp}\varphi_1^{\sigma^n\omega}} \varphi_{\rho_\omega^n(1)}^{\sigma^n\omega} dm\\
 &=
  \begin{cases}
  \frac{1}{r} & (\rho_\omega^n(1)\neq 1) \\
  \frac{1}{r}-1 & (\rho_\omega^n(1)=1)
  \end{cases}
 \end{align*}
for each $n\ge0$.
This contradicts posterior mixing for inhomogeneous observables in $B(\Omega,L^\infty(X))$ of $(P,\sigma)$.

(4) $\Rightarrow$ (1):
We assume $(P,\sigma)$ is asymptotically periodic with $r=1$.
That is, for any $f\in L^1(X,m)$, $\left\lVert P^{(n)}_{\omega}(f-\lambda^{\omega}(f)\varphi^{\omega})\right\rVert_{L^1(X)}\to0$ as $n\to\infty$.
Since $P_{\omega}$ is Markov and $\varphi_{\omega}\in D(X,m)$ for each $\omega\in\Omega$,
for any $f\in D(X,m)$ we have $\lambda^{\omega}(f)=1$.
Thus $(P,\sigma)$ is exact by Remark \ref{B}.
\end{proof}

The following proposition reveals the relationship between two kinds of mixing and exactness.
Namely, in the setting of asymptotically periodic systems mixing for homogeneous observables, mixing for inhomogeneous measurable observables and exactness are equivalent under certain topological assumption of $\Omega$.

\begin{proposition}\label{XX}
Let $(P,\sigma)$ be an asymptotically periodic Markov operator cocycle.
Suppose $\sigma$ preserves a regular probability measure $\mathbb{P}$ on a metric space $(\Omega,\mathcal{F},\mathbb{P})$ with metric $d_{\Omega}$ and for $\mathbb{P}$-almost every $\omega\in\Omega$, each component of invariant densities $\varphi_i^{\omega}$ belongs to $C(\Omega,L^\infty(X,m))$.
Then the condition that $(P,\sigma)$ is prior mixing for homogeneous observables is necessary and sufficient for each condition in Theorem \ref{X}.
\end{proposition}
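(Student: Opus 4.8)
The plan is to add the statement ``$(P,\sigma)$ is prior mixing for homogeneous observables'' as a fifth condition $(5)$ to the list of Theorem~\ref{X} and to prove $(2)\Rightarrow(5)\Rightarrow(4)$; together with the equivalences already established in Theorem~\ref{X}, this closes the loop. The implication $(2)\Rightarrow(5)$ is immediate, since homogeneous observables form a subclass of $B(\Omega,L^\infty(X,m))$. So the real content is $(5)\Rightarrow(4)$, i.e.\ that prior mixing for homogeneous observables forces the period $r$ in the asymptotically periodic representation of Definition~\ref{ap} to equal $1$. I would argue by contraposition: assuming $r\ge 2$, I will exhibit, for $\mathbb P$-almost every $\omega$, some $f\in L^1_0(X,m)$ and some \emph{homogeneous} observable $g\in L^\infty(X,m)$ (both allowed to depend on $\omega$) for which $\int_X P_\omega^{(n)}f\,g\,dm\not\to 0$.

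The choice of test functions is the crux. Fix $\omega$ and take $g=g_1^\omega$, which lies in $L^\infty(X,m)$ by the standing hypothesis on the $g_i$, and $f=h_\omega-g_1^\omega\in L^1_0(X,m)$, where $h_\omega=\frac1r\sum_{i=1}^r g_i^\omega$ is the invariant density. Iterating $P_\omega g_i^\omega=g_{\rho_\omega(i)}^{\sigma\omega}$ gives $P_\omega^{(n)}g_i^\omega=g_{\rho_\omega^n(i)}^{\sigma^n\omega}$, hence $P_\omega^{(n)}h_\omega=h_{\sigma^n\omega}$ and $P_\omega^{(n)}f=h_{\sigma^n\omega}-g_{\rho_\omega^n(1)}^{\sigma^n\omega}$. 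Now I exploit recurrence: by the Poincar\'e recurrence theorem (in its topological form, available thanks to the regularity of $\mathbb P$ on the metric space $\Omega$), for $\mathbb P$-almost every $\omega$ there is a sequence $n_i\to\infty$ with $\sigma^{n_i}\omega\to\omega$; passing to a subsequence I may further assume $\rho_\omega^{n_i}(1)\equiv j$ is constant, as $\{1,\dots,r\}$ is finite. Using the continuity of the maps $g_i\colon\Omega\to L^\infty(X,m)$ (hence of $h$), we get $g_i^{\sigma^{n_i}\omega}\to g_i^\omega$ and $h_{\sigma^{n_i}\omega}\to h_\omega$ in $L^\infty$, so that
\[
\int_X P_\omega^{(n_i)}f\cdot g_1^\omega\,dm\;\longrightarrow\;\int_X h_\omega g_1^\omega\,dm-\int_X g_j^\omega g_1^\omega\,dm .
\]
Because the $g_i^\omega$ have mutually disjoint supports, $\int_X h_\omega g_1^\omega\,dm=\frac1r\int_X(g_1^\omega)^2\,dm$, while $\int_X g_j^\omega g_1^\omega\,dm$ equals $\int_X(g_1^\omega)^2\,dm$ if $j=1$ and $0$ otherwise; since $\int_X(g_1^\omega)^2\,dm\ge\bigl(\int_X g_1^\omega\,dm\bigr)^2=1$ and $r\ge 2$, the right-hand side is nonzero in either case. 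Thus $\int_X P_\omega^{(n)}f\,g_1^\omega\,dm\not\to 0$ for a $\mathbb P$-full set of $\omega$, contradicting prior mixing for homogeneous observables; hence $r=1$, which is condition $(4)$.

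The main obstacle, and the reason the extra hypotheses of the Proposition are imposed, is that for homogeneous observables the function $g$ cannot depend on the environment, so the ``moving indicator'' $g=1_{\operatorname*{supp}g_1^{\sigma^n\omega}}$ used in the proof of $(3)\Rightarrow(4)$ in Theorem~\ref{X} is not available; the substitute is to fix $g=g_1^\omega$ and use recurrence of $\sigma$ together with continuity of $\omega\mapsto g_i^\omega$ to recover the same obstruction along a subsequence. A secondary point worth stating carefully is that $L^\infty$-continuity of $g_i$ controls the functions $g_i^{\sigma^{n_i}\omega}$ but not their supports, which is precisely why one tests against $g_1^\omega$ itself rather than against its indicator; and the usual bookkeeping is needed to intersect countably many $\mathbb P$-full sets (the recurrence set, the sets on which the continuous versions of $h$ and the $g_i$ agree with the given maps, and their $\sigma$-preimages) in order to produce the single $\mathbb P$-full set of admissible $\omega$.
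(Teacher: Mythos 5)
Your proof is correct and follows essentially the same route as the paper's: assume $r\ge 2$ for contradiction, use Poincar\'e recurrence of $\sigma$ together with continuity of $\omega\mapsto g_i^\omega$ to bring $\sigma^{n_k}\omega$ back near $\omega$, and use the disjointness of the supports of the $g_i^\omega$ to show that a homogeneous observable anchored at the base point $\omega$ detects the permutation, so the correlation cannot tend to zero. The only (cosmetic) difference is the choice of observable: the paper tests $f=g_1^\omega-h_\omega$ against the indicator $1_{\operatorname{supp}g_1^\omega}$ and derives uniform lower bounds $\tfrac1r-2\epsilon$ resp.\ $(1-2\epsilon)-\tfrac1r$ (so your worry about controlling supports is unfounded, since the indicator is fixed at $\omega$ and only $L^1$-closeness of $g_i^{\sigma^{n_k}\omega}$ to $g_i^{\omega}$ is needed), whereas you test against $g_1^\omega$ itself and compute the exact limit along a subsequence via Cauchy--Schwarz; both work.
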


\begin{proof}
Necessity:
obvious.

Sufficiency:
we show $r=1$ in the representation of asymptotic periodicity.
Assume $r>1$ contrarily.
By our assumption, for $\mathbb{P}$-almost every $\omega\in\Omega$ and any $\epsilon>0$ there exists $\delta>0$ such that $d_{\Omega}(\omega, \omega ')<\delta$ implies $\left\lVert \varphi_i^{\omega} - \varphi_i^{\omega '} \right\rVert_{L^1(X)} < \epsilon$ for $i=1,\dots,r$.
Also, we have that for $i \neq j$, if $d_{\Omega}(\omega, \omega')<\delta$ then
 \begin{align*}
 \int_{\operatorname*{supp}\varphi_j^{\omega}} \varphi_i^{\omega'} dm \le \left\lVert \varphi_i^{\omega'} - \varphi_i^{\omega} \right\rVert_{L^1(X)} + \int_{\operatorname*{supp}\varphi_j^{\omega}} \varphi_i^{\omega} dm < \epsilon.
 \end{align*}
Set a $\delta$-ball $B_{\delta}(\omega)$ centered at a given $\omega\in\Omega$ which is of positive measure since $\mathbb{P}$ is regular.
Poincar\'{e}'s recurrence theorem tells us that $\sigma^n\omega$ visits $B_{\delta}(\omega)$ infinitely many times and let $\{n_k\}_k$ satisfy $\sigma^{n_k}\omega\in B_{\delta}(\omega)$.
Then for an invariant density $h_{\omega}=\frac{1}{r}\sum_{i=1}^r\varphi_i^{\omega}$ and for $A=\operatorname*{supp}\varphi_1^{\omega}$, if $\omega'$ satisfies $d_{\Omega}(\omega, \omega')<\delta$,
 \begin{align*}
 \left\lvert \int_A h_{\omega'} dm \right\rvert &=
 \frac{1}{r} \left\lvert \int_A \left( \varphi_1^{\omega'} - \varphi_1^{\omega} + \varphi_1^{\omega} \right)dm + \sum_{i\neq 1}\int_A \varphi_i^{\omega'}dm \right\rvert
 \end{align*}
and we have
 \begin{align*}
 \frac{1}{r} - \epsilon \le \left\lvert \int_A h_{\omega'} dm \right\rvert \le \frac{1}{r} + \epsilon.
 \end{align*}
Therefore, taking $0<\epsilon<\frac{1}{2r}$ we have
 \begin{align*}
 \left\lvert\int_AP^{(n_k)}_{\omega}\left(\varphi_1^{\omega}-h_{\omega}\right)dm\right\rvert
 &=\left\lvert\int_A\left(\varphi_{\rho_\omega^{n_k}(1)}^{\sigma^{n_k}\omega}-h_{\sigma^{n_k}\omega}\right)dm\right\rvert\\
 &\ge
  \begin{cases}
  \frac{1}{r} - 2\epsilon &(\rho_\omega^{n_k}(1)\neq1)\\
  (1 - 2\epsilon)-\frac{1}{r} &(\rho_\omega^{n_k}(1)=1)
  \end{cases}\\
 &>0.
 \end{align*}
This contradicts prior mixing of $(P,\sigma)$ for homogeneous observables.
\end{proof}

In order to relate asymptotic periodicity and exactness together with the existence of an invariant density, we give the definition of quasi-constrictiveness.

\begin{definition}
Let $(P,\sigma)$ be a Markov operator cocycle.
Then $(P,\sigma)$ is called {\it quasi-constrictive} if for any $\epsilon>0$ there exists $\delta>0$ such that for any $E\in\mathcal{A}$ with $m(E)<\delta$, it holds that
 \begin{align*}
 \operatorname*{lim\ sup}_{n\to\infty}\operatorname*{ess\ sup}_{\omega\in\Omega}\int_EP_{\omega}^{(n)}fdm<\epsilon\quad \text{for any }f\in D(X,m).
 \end{align*}
\end{definition}

For the sake of convenience, for an asymptotically periodic Markov operator cocycle $(P,\sigma)$ and each component of the invariant density $\varphi_i^{\omega}$,
we denote a measurable map $\omega \mapsto P_{\omega}^{(k)}\mid_{\operatorname*{supp}\varphi_i^{\omega}}$ by $P^{(k)}\mid_{\operatorname*{supp}\varphi_i}$ for $k\in\mathbb{N}$ and $i=1,\dots,r$.
In the following two propositions, we can see that (i): asymptotic periodicity of $(P,\sigma)$ is equivalent to (ii): the existence of an invariant density and exactness of $(P^{(k)}\mid_{\operatorname*{supp}\varphi_i},\sigma^k)$ for some $k\in\mathbb{N}$ and all $i=1,\dots,r$.

\begin{proposition}\label{XXX}
Let $(P,\sigma)$ be a Markov operator cocycle such that $P$ is strongly continuous i.e., $\omega \mapsto P_{\omega}f$ is continuous for each $f \in L^1(X,m)$.
Suppose $\Omega$ is compact, $(P,\sigma)$ has an invariant density $h_{\omega}$ and $\{\mu_{\omega}\}_{\omega\in\Omega}$ is uniformly absolutely continuous with respect to $m$ where  $d\mu_{\omega}=h_{\omega}dm$.
If $(P^{(k)},\sigma^k)$ is exact for some $k\in\mathbb{N}$, then $(P,\sigma)$ is quasi-constrictive.
\end{proposition}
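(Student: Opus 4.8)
The plan is to promote the hypothesis to exactness of $(P,\sigma)$ itself, then split the time scale so as to reduce quasi-constrictiveness to a single \emph{uniform-in-$\omega$} estimate, and finally obtain that estimate from the compactness of $\Omega$ and the strong continuity of $P$. For the first step, observe that the $n$-th iterate of the cocycle $(P^{(k)},\sigma^k)$ equals $P^{(kn)}_\omega$, so by Definition~\ref{d:32} there is a single $\mathbb P$-full measure set $\Omega_0$, which we intersect with the full measure sets on which each $P_\omega$ is Markov and $h$ is invariant, such that $\lVert P^{(kn)}_\omega\tilde f\rVert_{L^1(X)}\to 0$ for all $\omega\in\Omega_0$ and $\tilde f\in L^1_0(X,m)$. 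Writing $N=k\lfloor N/k\rfloor+j$ with $0\le j<k$ and using the cocycle identity $P^{(N)}_\omega=P^{(j)}_{\sigma^{k\lfloor N/k\rfloor}\omega}\circ P^{(k\lfloor N/k\rfloor)}_\omega$ together with the $L^1$-contractivity of Markov operators on $L^1_0(X,m)$, one gets $\lVert P^{(N)}_\omega\tilde f\rVert_{L^1(X)}\le\lVert P^{(k\lfloor N/k\rfloor)}_\omega\tilde f\rVert_{L^1(X)}\to 0$; hence $(P,\sigma)$ is exact, which by the remark after Definition~\ref{d:32} means $d_n(\omega):=\lVert P^{(n)}_\omega f-h_{\sigma^n\omega}\rVert_{L^1(X)}\to 0$ for $\omega\in\Omega_0$ and $f\in D(X,m)$, with $d_n(\omega)$ non-increasing in $n$ because $P^{(n)}_\omega f-h_{\sigma^n\omega}=P^{(n)}_\omega(f-h_\omega)\in L^1_0(X,m)$.

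For the reduction, fix $f\in D(X,m)$, $E\in\mathcal A$ and $0\le n_0\le n$; using $P^{(n)}_\omega=P^{(n-n_0)}_{\sigma^{n_0}\omega}\circ P^{(n_0)}_\omega$ and $P^{(n-n_0)}_{\sigma^{n_0}\omega}h_{\sigma^{n_0}\omega}=h_{\sigma^n\omega}$ we obtain, for $\omega\in\Omega_0$,
\[
\int_E P^{(n)}_\omega f\,dm=\mu_{\sigma^n\omega}(E)+\int_E P^{(n-n_0)}_{\sigma^{n_0}\omega}\left(P^{(n_0)}_\omega f-h_{\sigma^{n_0}\omega}\right)dm\le \mu_{\sigma^n\omega}(E)+d_{n_0}(\omega),
\]
the last step again by $L^1$-contractivity. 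Since $\sigma$ is $\mathbb P$-preserving, $\operatorname*{ess\,sup}_{\omega}\mu_{\sigma^n\omega}(E)=\operatorname*{ess\,sup}_{\eta}\mu_\eta(E)$, so taking $\operatorname*{ess\,sup}_{\omega}$, then $\limsup_{n}$ (the right-hand side being independent of $n$), then $n_0\to\infty$ gives
\[
\limsup_{n\to\infty}\operatorname*{ess\,sup}_{\omega\in\Omega}\int_E P^{(n)}_\omega f\,dm\le \operatorname*{ess\,sup}_{\eta\in\Omega}\mu_\eta(E)+\lim_{n_0\to\infty}\operatorname*{ess\,sup}_{\omega\in\Omega}d_{n_0}(\omega).
\]
Hence everything reduces to the \emph{uniform-in-$\omega$ exactness}: $\lim_{n\to\infty}\operatorname*{ess\,sup}_{\omega\in\Omega}\lVert P^{(n)}_\omega f-h_{\sigma^n\omega}\rVert_{L^1(X)}=0$ for every $f\in D(X,m)$. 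Granting this, given $\epsilon>0$ the uniform absolute continuity of $\{\mu_\omega\}$ yields $\delta>0$ with $\operatorname*{ess\,sup}_{\eta}\mu_\eta(E)<\epsilon$ whenever $m(E)<\delta$; since $\delta$ depends only on $\epsilon$, the uniformity over $f\in D(X,m)$ required by the definition is automatic.

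Proving the uniform-in-$\omega$ exactness is the main obstacle, and the plan for it is as follows. It suffices to treat $f$ in a countable $L^1$-dense subset of $D(X,m)$, as $d_n(\omega)$ for $f$ and for a nearby $f'$ differ by at most $\lVert f-f'\rVert_{L^1(X)}$. For fixed such $f$ the maps $\omega\mapsto d_n(\omega)$ are non-increasing in $n$ and converge to $0$ on $\Omega_0$; using strong continuity of $P$ (together with the continuity of the base map and of the invariant density map that are available in the framework where this notion is considered, cf.\ \cite{NN}), the maps $\omega\mapsto P^{(n)}_\omega f$ and $\omega\mapsto h_{\sigma^n\omega}$ are continuous into $L^1(X,m)$, so each $d_n$ is continuous on the compact space $\Omega$, and I would run a Dini-type argument to upgrade the monotone convergence $d_n\downarrow 0$ to uniform convergence on $\Omega$. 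The delicate point — and the place where compactness of $\Omega$ and the topological structure of $(\Omega,\sigma)$ are essential — is precisely this interchange of $\limsup_n$ with $\operatorname*{ess\,sup}_\omega$: the pointwise exactness of the first step controls each $\omega$ separately but does not by itself bound $\limsup_n\operatorname*{ess\,sup}_\omega$, and since a priori $d_n\to 0$ only $\mathbb P$-almost everywhere, one must first promote this to convergence everywhere on $\Omega$ in order to be in a genuine Dini situation; carrying this promotion out carefully is the technical heart of the argument.
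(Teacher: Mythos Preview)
Your outline follows the paper's proof closely. The two cosmetic differences are: (i) you first promote exactness of $(P^{(k)},\sigma^k)$ to exactness of $(P,\sigma)$ via $L^1$-contractivity, whereas the paper writes $n=n_0k+j$ and carries the remainder $j\in\{0,\dots,k-1\}$ through the estimate; and (ii) for the uniform-in-$\omega$ convergence you propose a Dini argument, whereas the paper argues by contradiction, extracting by compactness of $\Omega$ a convergent subsequence $\omega'_s\to\bar\omega$ along which $\lVert P_{\omega'_s}^{(n'_sk)}\varphi\rVert_{L^1(X)}\ge\epsilon_0$ and then comparing with $\lVert P_{\bar\omega}^{(n'_sk)}\varphi\rVert_{L^1(X)}$. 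Since the sequence $n\mapsto\lVert P_\omega^{(n)}\varphi\rVert_{L^1(X)}$ is non-increasing, the two devices are equivalent: the paper's contradiction is exactly the contrapositive of Dini.

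Your concern about needing continuity of $\sigma$ (so that $\omega\mapsto P^{(n)}_\omega f$ is continuous for fixed $n$) and of $\omega\mapsto h_\omega$ is well-placed and is shared by the paper's argument. The paper's subsequence step passes from $\omega'_s$ to $\bar\omega$ while $n'_s\to\infty$ invoking only ``strong continuity of $P$'', and it also asserts the existence of a \emph{single} $\varphi\in L^1_0(X,m)$ even though the quantity being iterated, $P_\omega^{(j)}f-h_{\sigma^j\omega}$, varies with $\omega$; both points appear to rest on the same unstated topological hypotheses on $\sigma$ and $h$ that you flag (and that are indeed available in the setting of \cite{NN}, cf.\ the remark following Proposition~\ref{XXXX}). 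Note that your own $d_{n_0}(\omega)=\lVert P^{(n_0)}_\omega(f-h_\omega)\rVert_{L^1(X)}$ has the argument $f-h_\omega$ varying with $\omega$ as well, so before running Dini on a fixed $\varphi$ you would also need a compactness argument over the set $\{f-h_\omega:\omega\in\Omega\}$, which again presupposes continuity of $h$. In short, you have correctly located the one nontrivial step, and your plan for it matches the paper's---including its implicit hypotheses.
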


\begin{proof}
By the assumption, $(P,\sigma)$ admits an invariant density $\{h_{\omega}\}_{\omega}$ and there exists $k\in\mathbb{N}$ such that, for any $\psi\in L^1_0(X,m)$ and $\mathbb{P}$-almost every $\omega$, we have $\left\lVert P^{(kn)}_{\omega}\psi\right\rVert_{L^1(X)}\to0$ as $n\to\infty$.
We show $(P,\sigma)$ is quasi-constrictive.
For any $N$ sufficiently large, $f\in D(X,m)$ and $E\in\mathcal{A}$,
 \begin{align*}
 &\sup_{n>N}\operatorname*{ess\ sup}_{\omega\in\Omega}\int_EP_{\omega}^{(n)}fdm
 \\
 =&\sup_{n_0k\ge N}\operatorname*{ess\ sup}_{\omega\in\Omega}\left\{\int_EP_{\omega}^{(n_0k)}fdm,\int_EP_{\omega}^{(n_0k+1)}fdm,\dots,\int_EP_{\omega}^{((n_0+1)k-1)}fdm\right\}.
 \end{align*}
For $j\in\{0,\dots, k-1\}$, we have
 \begin{align*}
 \int_EP_{\omega}^{(n_0k+j)}fdm
 &=\int_EP_{\sigma^j\omega}^{(n_0k)}P_{\omega}^{(j)}fdm
 \\
 &\le \left\Vert P_{\sigma^j\omega}^{(n_0k)}\left(P_{\omega}^{(j)}f-h_{\sigma^j\omega}\right)\right\Vert_{L^1(X)}+\int_EP_{\sigma^j\omega}^{(n_0k)}h_{\sigma^j\omega}dm
 \\
 &= \left\Vert P_{\sigma^j\omega}^{(n_0k)}\left(P_{\omega}^{(j)}f-h_{\sigma^j\omega}\right)\right\Vert_{L^1(X)}+\int_Eh_{\sigma^{n_0k+j}\omega}dm.
 \end{align*}
Thus, since $P_{\omega}^{(j)}f-h_{\sigma^j\omega}\in L^1_0(X,m)$ and $\Omega$ is compact, we have
 \begin{eqnarray}
&& \operatorname*{lim\ sup}_{n\to\infty}\operatorname*{ess\ sup}_{\omega\in\Omega}\int_EP_{\omega}^{(n)}fdm \nonumber\\
 &\leq& \operatorname*{lim\ sup}_{n_0\to\infty} \operatorname*{ess\ sup}_{\omega\in\Omega} \max_{0\le j \le k-1} \left\Vert P_{\sigma^j\omega}^{(n_0k)}\left(P_{\omega}^{(j)}f-h_{\sigma^j\omega}\right)\right\Vert_{L^1(X)} \nonumber
 \\
 &&\qquad + \operatorname*{lim\ sup}_{n_0\to\infty} \operatorname*{ess\ sup}_{\omega\in\Omega} \max_{0 \le j \le k-1} \mu_{\sigma^{n_0k+j}\omega}(E) \nonumber
 \\
 &\leq& \operatorname*{ess\ sup}_{\omega\in\Omega} \mu_{\omega}(E). \label{eqA}
 \end{eqnarray}
Indeed, if $\displaystyle \operatorname*{lim\ sup}_{n_0\to\infty} \operatorname*{ess\ sup}_{\omega\in\Omega} \max_{0\le j \le k-1} \left\Vert P_{\sigma^j\omega}^{(n_0k)}\left(P_{\omega}^{(j)}f-h_{\sigma^j\omega}\right)\right\Vert_{L^1(X)}\neq 0$,
there exist $\psi \in L^1_0(X,m)$, $\epsilon_0>0$, $\{ \omega_l \}_l \subset \Omega$ and $\{ n_l \}_l \subset \mathbb{N}$ such that $\left\Vert P_{\omega_l}^{(n_lk)} \psi \right\Vert_{L^1(X)} \ge \epsilon_0$.
Compactness of $\Omega$ ensures that there exists further subsequences $\omega'_s = \omega_{l_s}$ and $n'_s = n_{l_s}$ such that $\omega'_s \to \bar{\omega}$ for some $\bar{\omega} \in \Omega$ as $s$ tends to $\infty$.
Now strong continuity of $P$ and exactness of $P^k$ imply that
 \begin{align*}
 \left\Vert P_{\omega'_s}^{(n'_sk)} \psi \right\Vert_{L^1(X)}
 \le \left\Vert P_{\omega'_s}^{(n'_sk)} \psi - P_{\bar{\omega}}^{(n'_sk)} \psi \right\Vert_{L^1(X)} +\left\Vert P_{\bar{\omega}}^{(n'_sk)} \psi \right\Vert_{L^1(X)}
 \to 0
 \end{align*}
as $s \to \infty$ and this leads contradiction.

Uniform absolute continuity of $\{\mu_{\omega}\}_{\omega}$ with respect to $m$ implies
that for each $\epsilon>0$ there exists $\delta>0$
such that if $m(E)<\delta$ then (\ref{eqA}) is less than $\epsilon$.
Therefore the desired result is obtained.
\end{proof}

\begin{proposition}\label{XXXX}
Let $(P,\sigma)$ be an asymptotically periodic Markov operator cocycle such that the permutation $\rho_{\omega}\equiv\rho$ in Definition \ref{ap} is constant $\mathbb{P}$-almost everywhere.
Then $(P,\sigma)$ admits an invariant density and there exists a natural number $k$ such that $(P^{(k)}\mid_{\operatorname*{supp}\varphi_i},\sigma^k)$ is exact for $i=1,\dots,r$.
\end{proposition}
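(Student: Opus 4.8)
The plan is to extract the invariant density directly from the decomposition \eqref{asydeco}, then to single out the right power $k$ of the (now fixed) permutation $\rho$, check that the restricted objects $(P^{(k)}\mid_{\operatorname*{supp}g_i},\sigma^k)$ are genuine Markov operator cocycles with invariant density $g_i$, and finally read off exactness from \eqref{asydeco}.

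\emph{Invariant density.} Put $h_\omega=\frac1r\sum_{i=1}^r g_i^\omega$. Because the permutation is the fixed $\rho$, one has $P_\omega h_\omega=\frac1r\sum_{i=1}^r P_\omega g_i^\omega=\frac1r\sum_{i=1}^r g_{\rho(i)}^{\sigma\omega}=\frac1r\sum_{i=1}^r g_i^{\sigma\omega}=h_{\sigma\omega}$ for $\mathbb P$-almost every $\omega$, and $h_\omega\in D(X,m)$ since the $g_i^\omega$ are nonnegative, of unit $L^1$-norm, and have disjoint supports; measurability of $\omega\mapsto h_\omega$ comes from $g_i\in B(\Omega,D(X,m))$. (This is the observation already recorded after Definition \ref{ap}.)

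\emph{Choice of $k$ and the restricted cocycles.} Let $k\in\mathbb N$ be such that $\rho^k=\mathrm{id}$ (e.g.\ $k=r!$). Iterating the first identity in \eqref{asydeco} gives $P_\omega^{(n)}g_i^\omega=g_{\rho^n(i)}^{\sigma^n\omega}$ for all $n$, in particular $P_\omega^{(k)}g_i^\omega=g_i^{\sigma^k\omega}$. First I would check that $P_\omega^{(k)}$ maps $L^1(\operatorname*{supp}g_i^\omega,m)$ into $L^1(\operatorname*{supp}g_i^{\sigma^k\omega},m)$, so that $(P^{(k)}\mid_{\operatorname*{supp}g_i},\sigma^k)$ is indeed a Markov operator cocycle with invariant density $g_i$: for $f\ge0$ supported in $\operatorname*{supp}g_i^\omega$, set $f_M=\min\{f,M g_i^\omega\}$; then $f_M\uparrow f$ $m$-almost everywhere and hence in $L^1$, while $0\le P_\omega^{(k)}f_M\le M\,P_\omega^{(k)}g_i^\omega=M g_i^{\sigma^k\omega}$ vanishes off $\operatorname*{supp}g_i^{\sigma^k\omega}$; letting $M\to\infty$ and using that $P_\omega^{(k)}$ is an $L^1$-contraction, $P_\omega^{(k)}f$ vanishes there too, and the signed case follows by splitting $f=f^+-f^-$. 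The same argument with $n$ in place of $k$ shows that $P_\omega^{(n)}f$ is supported in $\operatorname*{supp}g_{\rho^n(i)}^{\sigma^n\omega}$.

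\emph{Identifying the coefficients and concluding.} Fix $\omega$ in the $\mathbb P$-full set on which \eqref{asydeco} holds, $\rho_\omega=\rho$, and every $\sigma^j\omega$ ($j\ge0$) also lies in it, and fix $f\in L^1(X,m)$ supported in $\operatorname*{supp}g_i^\omega$. Write $P_\omega^{(n)}f=\sum_{j=1}^r\lambda_j^\omega(f)g_{\rho^n(j)}^{\sigma^n\omega}+\varepsilon_n$ with $\|\varepsilon_n\|_{L^1(X)}\to0$, the sum consisting of disjointly supported pieces of unit norm. Restricting this identity to the complement of $\operatorname*{supp}g_{\rho^n(i)}^{\sigma^n\omega}$ (where $P_\omega^{(n)}f=0$ and, $\rho^n$ being a bijection, all the terms $g_{\rho^n(j)}^{\sigma^n\omega}$ with $j\ne i$ live) gives $\sum_{j\ne i}|\lambda_j^\omega(f)|\le\|\varepsilon_n\|_{L^1(X)}$ for every $n$, hence $\lambda_j^\omega(f)=0$ for $j\ne i$; integrating the identity over $X$ and using $\int_X P_\omega^{(n)}f\,dm=\int_X f\,dm$ gives $\lambda_i^\omega(f)=\int_X f\,dm$. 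Therefore, if in addition $\int_X f\,dm=0$, all $\lambda_j^\omega(f)$ vanish, so \eqref{asydeco} yields $\|P_\omega^{(n)}f\|_{L^1(X)}\to0$, and in particular $\|P_\omega^{(km)}f\|_{L^1(X)}\to0$ as $m\to\infty$. Since this holds for $\mathbb P$-almost every $\omega$ and every $f\in L^1_0(X,m)$ supported in $\operatorname*{supp}g_i^\omega$, this is exactly exactness of $(P^{(k)}\mid_{\operatorname*{supp}g_i},\sigma^k)$ in the sense of Definition \ref{d:32}, for each $i=1,\dots,r$.

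\emph{Main obstacle.} The only point that is not pure formality is the bookkeeping that makes $P^{(k)}\mid_{\operatorname*{supp}g_i}$ a bona fide Markov operator cocycle over $\sigma^k$ — namely the invariance of the subspaces $L^1(\operatorname*{supp}g_i^\omega,m)$ under $P_\omega^{(k)}$ and the attendant identification of the functionals $\lambda_j^\omega$ restricted to them; once that is in place, exactness falls out of \eqref{asydeco} with no further computation.
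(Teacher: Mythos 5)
Your proposal is correct and follows essentially the same route as the paper's proof: the invariant density $h_\omega=\frac1r\sum_i g_i^\omega$, the choice of $k$ with $\rho^k=\mathrm{id}$, the identification $\lambda_j^\omega(f)=\delta_{ij}\int_X f\,dm$ for $f$ supported in $\operatorname*{supp}g_i^\omega$, and the conclusion via \eqref{asydeco}. You merely supply details the paper leaves implicit (the truncation argument showing $P_\omega^{(k)}$ preserves the subspaces $L^1(\operatorname*{supp}g_i^\omega,m)$, and the disjoint-support argument pinning down the coefficients), and these details are sound.
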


\begin{proof}
Obviously $h_{\omega}=\frac{1}{r}\sum_{i=1}^r\varphi_i^{\omega}$ is an invariant density for $P_{\omega}$.

Now let $k$ be the smallest number satisfying $\rho^k=\mathrm{id}$.
Then, setting $A_i^{\omega}=\operatorname*{supp}\varphi_i^{\omega}$, $P^{(k)}_{\omega}\mid_{A_i^{\omega}}$ is a Markov operator from $L^1(A_i^{\omega}, m)$ into $L^1(A_i^{\sigma^k\omega}, m)$.
By representation of asymptotic periodicity of $P_{\omega}$, for any $f\in D(A_i^{\omega}, m)$ we have that
 \begin{align*}
 \lambda_j^{\omega}(f)=
  \begin{cases}
  1 &(j=i)\\
  0 &(j\neq i).
  \end{cases}
 \end{align*}
This implies that for any $f\in D(A_i^{\omega}, m)$,
 \begin{align*}
 \lim_{n\to \infty} \left\lVert P_{\omega}^{(nk)} \left( f - \varphi_i^{\omega} \right) \right\rVert_{L^1(X)} =0.
 \end{align*}
Therefore we conclude $(P^{(k)}\mid_{\operatorname*{supp}\varphi_i},\sigma^k)$ is exact by Remark \ref{B}.
\end{proof}

\begin{remark}
As is well known (see \cite{LM} and references therein), quasi-constrictive single Markov operators were shown to be asymptotically periodic.
In the forthcoming paper \cite{NN}, we will generalize their result for Markov operator cocycles.
As a consequence of Proposition \ref{XXX} and Proposition \ref{XXXX} together with the results in \cite{NN}, we the following when $\Omega$ is compact.
 \begin{enumerate}
 \item
 If $(P,\sigma)$ admits an invariant density bounded below and above and $(P^{(k)},\sigma^k)$ is exact for some $k\ge1$, then $(P,\sigma)$ is asymptotically periodic.
 \item
 Conversely, if $(P,\sigma)$ is asymptotically periodic, then $(P,\sigma)$ admits an invariant density and there exists $k\ge1$ such that $(P^{(k)}\mid_{\operatorname*{supp}\varphi_i},\sigma^k)$ is exact for $i=1,\dots,r$.
 \end{enumerate}
In particular,
$(P,\sigma)$ is asymptotically periodic with period 1 if and only if $(P,\sigma)$ admits a unique invariant density and is exact.
\end{remark}

\section*{Acknowledgement}

The authors would like to express our gratitude to Professor Michiko Yuri (Hokkaido university) for giving us constructive suggestions and  warm encouragement. 
This work was partially supported by JSPS KAKENHI Grant Number
19K14575.

\end{document}